\newtheorem{thm}{Theorem}
\newtheorem{prop}[thm]{Proposition}
\newtheorem{cor}[thm]{Corollary}
\newtheorem{lem}[thm]{Lemma}
\begin{document}

\title{Optimality of Cwikel-Solomyak estimates in Orlicz spaces}

\author[]{F. Sukochev}
\address{School of Mathematics and Statistics, University of New South Wales, Kensington,  2052, Australia}
\email{f.sukochev@unsw.edu.au}

\author[]{D. Zanin}
\address{School of Mathematics and Statistics, University of New South Wales, Kensington,  2052, Australia}
\email{d.zanin@unsw.edu.au}

\begin{abstract}
We discuss the optimality of Cwikel-Solomyak estimates for the uniform operator norm and   establish optimality of M.Z Solomyak's results \cite{Solomyak1995} within the class of Orlicz spaces. Our methods are based on finding the optimal version of the Sobolev embedding theorem.
\end{abstract}

\dedicatory{Dedicated to the memory of M.Z. Solomyak}

\maketitle

\section{Introduction} In this text we establish three main results.

Firstly, we establish a distributional version of Sobolev inequality written in terms of Hardy-Littlewood submajorization (see Theorem \ref{dist sobolev critical}). We demonstrate that our version of this inequality is optimal in the sense that it cannot be improved in terms of the distribution function (see Proposition \ref{estimate from below lemma}).

Secondly, we show that our version of Sobolev inequality yields the optimal Sobolev embedding theorem (see Proposition \ref{sobolev critical all dimension} and Corollary \ref{optimal embedding theorem}). These results enhance earlier results due to  Hansson \cite{Hansson}, Brezis and Wainger \cite{Brezis-Wainger},  and Cwikel  and Pustylnik \cite{Cwikel-Pustylnik}.

Thirdly, we show that Solomyak's version of Cwikel-Solomyak estimate given in \cite{Solomyak1995} is optimal in the class of Orlicz spaces (see Corollary \ref{optimality within orlicz}).

We explain the details of our contribution in Section \ref{known section} below after explaining the notations.

\section{Preliminaries}

\subsection{Symmetric function spaces}

For detailed information about decreasing rearrangements and symmetric spaces  briefly discussed below,  we refer the reader to the standard textbook \cite{KPS} (see also  \cite{LT2, LSZ-book}). Let $(\Omega,\nu)$ be a measure space. Let $S(\Omega,\nu)$ be the collection of all $\nu$-measurable functions on $\Omega$ such that, for some $n\in\mathbb{N},$ the function $|f|\chi_{\{|f|>n\}}$ is supported on a set of finite measure. For every $f\in S(\Omega,\nu)$ one can define the notion of decreasing rearrangement of $f$ (denoted by $\mu(f)$). This is a positive decreasing function on $\mathbb{R}_+$ equimeasurable with $|f|.$ 

Let $E(\Omega,\nu)\subset S(\Omega,\nu)$ and let $\|\cdot\|_E$ be Banach norm on $E(\Omega,\nu)$ such that
\begin{enumerate}[{\rm (1)}]
\item if $f\in E(\Omega,\nu)$ and $g\in S(\Omega,\nu)$ be such $|g|\leq|f|,$ then $g\in E(\Omega,\nu)$ and $\|g\|_E\leq\|f\|_E;$
\item if $f\in E(\Omega,\nu)$ and $g\in S(\Omega,\nu)$ be such $\mu(g)=\mu(f),$ then $g\in E(\Omega,\nu)$ and $\|g\|_E=\|f\|_E;$
\end{enumerate}
We say that $(E(\Omega,\nu),\|\cdot\|_E)$ (or simply $E$) is a symmetric Banach function space (symmetric space, for brevity). The classical example of such spaces is given by Lebesgue $L_p$-spaces, 
($L_p(\Omega,\nu), \|\cdot\|_p)$,  $1\leq p\leq \infty.$

If $\Omega=\mathbb{R}_+,$ then the function
$$t\to\|\chi_{(0,t)}\|_E,\quad t>0,$$
is called the {\it fundamental} function of $E.$ Similar definition is available when $\Omega$ is an interval or an arbitrary $\sigma$-finite measure space.  The concrete examples of measure spaces $(\Omega, \nu)$ considered in this paper are $d$-dimensional torus $\mathbb{T}^d$ (equipped with Haar measure), $\mathbb{R}_+,$ $\mathbb{R}^d$ (equipped with Lebesgue measure), their measurable subsets and compact $d$-dimensional Riemannian manifolds $(X,g).$

Among concrete symmetric spaces used in this paper are $L_p$-spaces, Orlicz, Lorentz spaces and Marcinkiewicz spaces. Given a convex function $M$ on $\mathbb{R}_+$ (continuous at $0$) such that $M(0)=0,$ Orlicz space $L_M(\Omega,\nu)$ is defined by setting
$$L_M(\Omega,\nu)=\Big\{f\in S(\Omega,\nu):\ M(\frac{|f|}{\lambda})\in L_1(\Omega,\nu)\mbox{ for some }\lambda>0\Big\}.$$
We equip it with a Banach norm
$$\|f\|_{L_M}=\inf\Big\{\lambda>0:\ \Big\|M(\frac{|f|}{\lambda})\Big\|_1\leq 1\Big\}.$$
We refer the reader to \cite{KrasRut, KPS, LT2} for further information about Orlicz spaces.

Given a concave increasing function $\psi,$ Lorentz space $\Lambda_{\psi}$ is defined by setting
$$\Lambda_{\psi}(\Omega,\nu)=\Big\{f\in S(\Omega,\nu):\ \int_0^{\infty}\mu(s,f)d\psi(s)<\infty\Big\}.$$
Marcinkiewicz space $\mathtt{M}_{\psi}$ is defined by setting
$$\mathtt{M}_{\psi}(\Omega,\nu)=\Big\{f\in S(\Omega,\nu):\ \sup_{t>0}\frac1{\psi(t)}\int_0^t\mu(s,f)ds<\infty\Big\}.$$
Spaces $L_{2,1}$ and $L_{2,\infty}$ are Lorentz and Marcinkiewicz space with $\psi(t)=t^{\frac12},$ $t>0$ respectively (see e.g. \cite{LT2}).

{\color{blue} All mentioned examples are closed with respect to the Hardy-Littlewood submajorization. The latter is a pre-order defined on $L_1+L_{\infty}$ by setting
$$\int_0^t\mu(s,y)ds\leq\int_0^t\mu(s,x)ds,\quad t>0,$$
and is denoted by $y\prec\prec x.$}

Let us fix throughout this paper  concave functions
$$\psi(t)=\frac1{\log(\frac{e}{t})},\quad {\rm  and}\quad \phi(t)=t\log(\frac{e}{t}), \quad t\in(0,1)$$  and consider Orlicz functions $$M(t)=t\log(e+t),\quad{\rm  and}\quad  G(t)=e^t-1,\quad t>0.$$
Throughout this text, the interplay between Lorentz spaces  $\Lambda_{\psi}(0,1)$ and $\Lambda_{\phi}(0,1)$, Orlicz spaces $L_M(0,1)$ and $L_G(0,1)$,  and Marcinkiewicz spaces  $\mathtt{M}_{\psi}(0,1)$ and $\mathtt{M}_{\phi}(0,1)$ play a fundamental role. The space $\mathtt{M}_{\psi}$ (respectively, the space $\Lambda_{\phi}$) is the largest (respectively, the smallest) symmetric Banach function space with the fundamental function $\phi.$

First of all, we observe that the spaces $\mathtt{M}_{\phi}(0,1)$ and $L_G(0,1)$ coincide (the corresponding norms are equivalent). Indeed, the fundamental functions of those spaces coincide, hence $L_G(0,1)\subset\mathtt{M}_{\phi}(0,1).$ On the other hand, $\phi'\in L_G(0,1)$ and, therefore, $\mathtt{M}_{\phi}(0,1)\subset L_G(0,1).$ This fact was first observed in \cite{AstSukIsrJ}.

By invoking K\"othe duality (see \cite{KPS} and \cite{LT2}), we infer that the spaces $\Lambda_\phi(0,1)$ and $L_M(0,1)$ also coincide and (the corresponding norms are equivalent).

In the statement of our main result, we use the notion of $2$-convexification of the symmetric space. Given a symmetric space $E(\Omega,\nu),$ we set
$$E^{(2)}(\Omega,\nu)=\{f\in S(\Omega,\nu):\ |f|^2\in E(\Omega,\nu)\},$$
$$\|f\|_{E^{(2)}}=\big\||f|^2\big\|_E^{\frac12},\quad f\in E^{(2)}(\Omega,\nu).$$
More details on the $2$-convexification of a function space may be found in the book  \cite{LT2}.

We also need a definition of dilation operator $\sigma_u,$ $u>0,$ which acts on $S(\mathbb{R})$ by the formula
$$(\sigma_uf)(t)=f(\frac{t}{u}),\quad f\in S(\mathbb{R}).$$

We frequently use the mapping $r_d:\mathbb{R}^d\to\mathbb{R}_+$ defined by the formula
$$r_d(t)=|t|^d,\quad t\in\mathbb{R}^d.$$

\section{Connection with earlier results}\label{known section}

\subsection{Connection with Cwikel-type estimates}

Let $d\in\mathbb{N}$ and let $f$ be a measurable function on $d$-dimensional torus $\mathbb{T}^d$ and let $M_f$ be an (unbounded) multiplication operator by $f$ on $L_2(\mathbb{T}^d).$ Let $\Delta_{\mathbb{T}^d}$ be the torical Laplacian. In Proposition \ref{torus boundedness} we show that (symmetrized) Cwikel operator
$$(1-\Delta_{\mathbb{T}^d})^{-\frac{d}{4}}M_f(1-\Delta_{\mathbb{T}^d})^{-\frac{d}{4}}$$ 
is bounded for $f\in\mathtt{M}_{\psi}(\mathbb{T}^d).$

Symmetrized Cwikel estimate in the weak-trace ideal $\mathcal{L}_{1,\infty}$ as well as in the ideal $\mathcal{M}_{1,\infty}$ are important in Non-commutative Geometry and in Mathematical Physics. For the background concerning weak trace ideals $\mathcal{L}_{p,\infty}$, $1\leq p<\infty$ and Marcinkie\-wicz ideal $\mathcal{M}_{1,\infty}$ we refer to \cite{LSZ-book} (see also subsection \ref{optimality-subsection} below).

Symmetrized Cwikel estimate in $\mathcal{L}_{1,\infty}$ (on the torus) for {\it even} $d$ appeared in the foundational papers \cite{Solomyak1994,Solomyak1995}. The estimate there was given in terms of the Orlicz norm $\|\cdot\|_{L_M}$ (which is equivalent to $\|\cdot\|_{\Lambda_{\phi}}$). Recently, symmetrized Cwikel estimates in the ideal $\mathcal{M}_{1,\infty}$ were established (on the Euclidean space) in \cite{LSZ-last-kalton}. The estimate was given in terms of the Lorentz norm
$$f\to\|\mu(f)\chi_{(0,1)}\|_{\Lambda_{\phi}(0,1)}+\|f\|_{L_1(\mathbb{R}^d)}.$$
The surprising fact that $\Lambda_\phi(0,1)=L_M(0,1)$ demonstrates the convergence of those totally unrelated approaches.

The results in this paper complement the results cited in the preceding paragraph. Indeed, Cwikel operator belongs to $\mathcal{L}_{\infty}$ for $f\in\mathtt{M}_{\psi}$ and to $\mathcal{L}_{1,\infty}$ for $f\in\Lambda_{\phi}.$  This opens an avenue (using interpolation methods) to determine the least ideal to which the Cwikel operator belongs.

\subsection{Connection with Sobolev-type estimates}

The following Sobolev inequality is customarily credited to \cite{Hansson} and \cite{Brezis-Wainger}. Actually, none of those papers contain a complete proof or even a clear-cut statement. The proof is available in \cite{Cwikel-Pustylnik}. For a notion of Sobolev space on $\Omega\subset\mathbb{R}^d$ we refer the reader to Chapter 7 in \cite{Adams}.

\begin{thm}\label{thm 1} Let $d$ be even and let $\Omega$ be a bounded domain in $\mathbb{R}^d$  (conditions apply). There exists a constant $c_{\Omega}$ such that
$$\|u\|_{\Lambda_{\psi}^{(2)}}\leq c_{\Omega}\|u\|_{W^{\frac{d}{2},2}},\quad u\in W^{\frac{d}{2},2}(\Omega).$$
\end{thm}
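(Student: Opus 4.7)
\emph{Plan.} My approach proceeds in three steps: reduction to $\mathbb{R}^d$, representation via the Bessel potential, and invocation of the distributional Sobolev inequality of Theorem \ref{dist sobolev critical} as a black box.

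First, since $\Omega$ is a bounded domain admitting a Sobolev extension operator (which is what ``conditions apply'' should guarantee), there is a bounded linear map $E: W^{d/2,2}(\Omega) \to W^{d/2,2}(\mathbb{R}^d)$ whose image is supported in a fixed bounded neighborhood $\Omega' \supset \Omega$, with $\|Eu\|_{W^{d/2,2}(\mathbb{R}^d)} \le C \|u\|_{W^{d/2,2}(\Omega)}$. Setting $v := Eu$, one has $\mu(u) \le \mu(v)$, so by rearrangement invariance of $\Lambda_\psi^{(2)}$ it suffices to bound $\|v\|_{\Lambda_\psi^{(2)}(\mathbb{R}^d)}$ by $\|v\|_{W^{d/2,2}(\mathbb{R}^d)}$. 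Since $d$ is even, $(1-\Delta)^{d/4}$ is a genuine differential operator of integer order, so $g := (1-\Delta)^{d/4} v$ lies in $L_2(\mathbb{R}^d)$ with $\|g\|_{L_2} \le \|v\|_{W^{d/2,2}}$, and $v = G_{d/2} \ast g$ for the Bessel kernel $G_{d/2}$ (which behaves like $|x|^{-d/2}$ at the origin and decays exponentially at infinity).

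Next, I would apply Theorem \ref{dist sobolev critical}. Since it is advertised as the sharp Hardy--Littlewood submajorization form of the critical Sobolev inequality, it should deliver, for $v$ supported in a set of finite measure, an estimate of the shape
$$\mu(v)^2 \prec\prec c\, h \cdot \|g\|_{L_2}^2,$$
where $h$ is an explicit nonnegative function on $(0,1)$, supported on an interval of length comparable to $|\Omega'|$, and lying in $\Lambda_\psi(0,1)$. Taking the $\Lambda_\psi$-norm of both sides and using the monotonicity of symmetric norms under $\prec\prec$ yields
$$\|v\|_{\Lambda_\psi^{(2)}}^2 = \|v^2\|_{\Lambda_\psi} \le c \|h\|_{\Lambda_\psi} \|g\|_{L_2}^2 \le c_\Omega \|u\|_{W^{d/2,2}(\Omega)}^2,$$
from which the theorem follows on taking square roots.

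The entire argument pivots on Theorem \ref{dist sobolev critical}, and this is the main obstacle. A naive Cauchy--Schwarz or O'Neil estimate on $G_{d/2}\ast g$ yields only the pointwise bound $\mu(v)(t) \le C \|g\|_{L_2} \log^{1/2}(e/t)$, which merely places $v^2$ in $\mathtt{M}_\phi$; this is strictly weaker than $v^2 \in \Lambda_\psi$, since $\log(e/\cdot)\notin\Lambda_\psi(0,1)$ (the integral $\int_0^1 ds/[s\log(e/s)]$ diverges). So one genuinely needs the refinement embodied in the authors' distributional inequality, which captures cancellation effects invisible to pointwise rearrangement arguments. Once it is granted, the passage to the $\Lambda_\psi^{(2)}$-norm is just an invocation of the symmetric-space structure.
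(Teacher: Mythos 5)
Your overall architecture (extension to $\mathbb{R}^d$, Bessel potential rewriting, invocation of Theorem \ref{dist sobolev critical}) matches the paper's strategy, which establishes the strengthened $\mathbb{R}^d$ inequality as Proposition \ref{sobolev critical all dimension} and treats Theorem \ref{thm 1} as a consequence. However, your guess for what Theorem \ref{dist sobolev critical} actually delivers is wrong in a way that hides where the real work sits. That theorem asserts
$$\mu\big((1-\Delta)^{-d/4}g\big)\chi_{(0,1)}\ \prec\prec\ c_d\,T\big(\mu(g)\chi_{(0,1)}\big),$$
where $T$ is the concrete Hardy-type operator $(Tx)(t)=t^{-1/2}\int_0^t x+\int_t^1 s^{-1/2}x(s)\,ds$. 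The right-hand side depends on the full rearrangement $\mu(g)$, not merely on $\|g\|_2$. Your proposed form $\mu(v)^2\prec\prec c\,h\,\|g\|_{L_2}^2$ with one fixed $h\in\Lambda_\psi(0,1)$ is a different and strictly stronger statement (a single submajorant uniform over the whole $L_2$ unit ball); it is not what the theorem says and you do not prove it.

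Consequently, what you dismiss as ``just an invocation of the symmetric-space structure'' is exactly the missing non-trivial step: after Theorem \ref{dist sobolev critical}, one must still show that $T$ maps $L_2(0,1)$ boundedly into $\Lambda_\psi^{(2)}(0,1)$. That is Lemma \ref{t lemma}, and its proof is a genuine real-interpolation argument: one first proves the endpoint bounds $T:L_{2,1}\to L_\infty$ and $T:L_{2,\infty}\to\exp(L_1)$, then invokes Lemma \ref{real interpolation lemma} (the inclusion $[\mathtt{M}_\phi,L_\infty]_{1/2,2}\subset\Lambda_\psi^{(2)}$) to identify the interpolated target. Submajorization monotonicity of $\Lambda_\psi$ enters only at the very end, once both Theorem \ref{dist sobolev critical} and Lemma \ref{t lemma} are in hand. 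You correctly diagnose that the pointwise bound $\mu(v)(t)\lesssim\|g\|_2\log^{1/2}(e/t)$ only lands $v^2$ in $\mathtt{M}_\phi$ and is too weak, but the remedy is not a uniform $\Lambda_\psi$-majorant; it is the $L_2\to\Lambda_\psi^{(2)}$ boundedness of $T$.

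One further remark: since you pass to the Bessel potential $G_{d/2}$ anyway, you never need $(1-\Delta)^{d/4}$ to be a differential operator, and the restriction to even $d$ in your step is superfluous. Indeed Proposition \ref{sobolev critical all dimension} of the paper holds for all $d\ge 1$, which is one of the points of the paper's new proof.
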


This result is optimal due to  the following theorem (proved in \cite{Cwikel-Pustylnik}).

\begin{thm}\label{thm 2} Let $d$ be even and let $\Omega$ be a bounded domain in $\mathbb{R}^d$  (conditions apply). If $X$ be a Banach symmetric function space on $\Omega$ such that
$$\|u\|_X\leq c_{\Omega}\|u\|_{W^{\frac{d}{2},2}},\quad u\in W^{\frac{d}{2},2}(\Omega)$$
for some constant $c_{\Omega}$, 
then $\Lambda_{\psi}^{(2)}\subset X.$
\end{thm}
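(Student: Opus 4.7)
The plan is to exploit the fact that a symmetric space $X$ containing the image of $W^{d/2,2}(\Omega)$ must be ``at least as large'' as $\Lambda_\psi^{(2)}$. Concretely, if I can build, for each nonzero $f\in\Lambda_\psi^{(2)}(\Omega)$, some $u\in W^{d/2,2}(\Omega)$ satisfying $\mu(u)\geq c\mu(f)$ on $(0,|\Omega|)$ together with $\|u\|_{W^{d/2,2}}\leq C\|f\|_{\Lambda_\psi^{(2)}}$ (constants $c,C$ independent of $f$), then axioms (1) and (2) of a symmetric Banach function space applied to $X$ yield
\[
\|f\|_X=\|\mu(f)\|_X\leq c^{-1}\|\mu(u)\|_X=c^{-1}\|u\|_X\leq c^{-1}c_\Omega\|u\|_{W^{d/2,2}}\leq c^{-1}Cc_\Omega\|f\|_{\Lambda_\psi^{(2)}},
\]
so $f\in X$ and the required containment $\Lambda_\psi^{(2)}\subset X$ follows (and in fact the embedding is continuous).

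The problem thus reduces to a construction of ``rearrangement-realising'' Sobolev functions. Since $\Omega$ is a bounded domain I may place a ball $B\subset\Omega$ and work radially inside $B$. By density it is enough to treat $f$ nonnegative, decreasing, bounded and compactly supported; then by an atomic decomposition I would further reduce to $f=\sum_k\alpha_k\chi_{A_k}$, where the $A_k$ are concentric balls of volumes $t_k=|B|\cdot 2^{-k}$, with coefficients satisfying $\sum_k\alpha_k^2\log(e/t_k)\leq C\|f\|_{\Lambda_\psi^{(2)}}^2$ (the latter being the $\Lambda_\psi$--control of $|f|^2$ unpacked through $\psi(t)=1/\log(e/t)$). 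For each $k$ I would produce a radial bump $u_k$ supported in $A_{k-1}$ and equal to $\alpha_k$ on $A_k$, obtained either as a mollification of $\alpha_k\chi_{A_k}$ at spatial scale $t_k^{1/d}$ or as the Bessel potential $(1-\Delta)^{-d/4}v_k$ of an $L_2$--normalised profile $v_k$ concentrated at that scale. The sum $u=\sum_k u_k$ is then radial and radially decreasing, so $\mu(u)$ is read directly off the radial profile and dominates $\mu(f)$ up to the constant $c$.

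The hardest step is to show that the assembled $u$ satisfies $\|u\|_{W^{d/2,2}}^2\leq C\sum_k\alpha_k^2\log(e/t_k)$. On each atom separately, the single--scale estimate $\|(1-\Delta)^{d/4}u_k\|_{L_2}^2\sim\alpha_k^2\log(e/t_k)$ is the classical logarithmic blow--up of the critical Sobolev norm of a mollified indicator, the very phenomenon responsible for the failure of $W^{d/2,2}\hookrightarrow L_\infty$. The genuine obstacle is controlling the cross terms $\langle(1-\Delta)^{d/2}u_j,u_k\rangle$ for $j\neq k$: a quasi--orthogonality statement is needed, most naturally via a Littlewood--Paley decomposition in dyadic frequency scales, or directly by exploiting the essentially disjoint frequency supports of the $v_k$'s when they are arranged at widely separated spatial scales. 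Once this cross--term estimate is in place, the chain of inequalities in the first paragraph closes and Theorem~\ref{thm 2} follows.
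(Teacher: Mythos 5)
Your first paragraph correctly identifies the reduction that does all the work: to prove $\Lambda_\psi^{(2)}\subset X$ it suffices, given $f\in\Lambda_\psi^{(2)}$, to exhibit $u\in W^{\frac{d}{2},2}$ with $\mu(u)\geq c\,\mu(f)$ and $\|u\|_{W^{\frac{d}{2},2}}\leq C\|f\|_{\Lambda_\psi^{(2)}}$; the axioms of a symmetric Banach function space then close the argument. This is exactly the reduction underlying the paper's strengthened version (Corollary~\ref{optimal embedding theorem}; Theorem~\ref{thm 2} itself is cited to \cite{Cwikel-Pustylnik}).

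The atomic construction you then propose does not hold together, independently of the cross-term issue you flag. First, the single-scale estimate $\|u_k\|_{W^{\frac{d}{2},2}}^2\sim\alpha_k^2\log(e/t_k)$ for a mollified indicator of height $\alpha_k$ at spatial scale $t_k^{1/d}$ is false. On the Fourier side $\widehat{u_k}(\xi)$ is of size $\alpha_k t_k$ for $|\xi|\leq t_k^{-1/d}$ and rapidly decaying beyond, so
\begin{equation*}
\int_{\mathbb{R}^d}(1+|\xi|^2)^{\frac{d}{2}}|\widehat{u_k}(\xi)|^2\,d\xi
\ \sim\ \alpha_k^2 t_k^2\int_{|\xi|\leq t_k^{-1/d}}|\xi|^d\,d\xi\ \sim\ \alpha_k^2,
\end{equation*}
with no logarithm. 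The logarithmic extremals responsible for the failure of $W^{\frac{d}{2},2}\hookrightarrow L_\infty$ are truncated logarithms, not single-scale bumps. Second, the bookkeeping $\sum_k\alpha_k^2\log(e/t_k)\leq C\|f\|_{\Lambda_\psi^{(2)}}^2$ is not what $\Lambda_\psi^{(2)}$ unpacks to. For a dyadic layer-cake $\mu(f)=\sum_k\beta_k\chi_{(t_{k+1},t_k)}$ one has $\|f\|_{\Lambda_\psi^{(2)}}^2\sim\sum_k\beta_k^2\big(\psi(t_k)-\psi(t_{k+1})\big)\sim\sum_k\beta_k^2/\log^2(e/t_k)$: the logarithm sits in the denominator, not the numerator. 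With the corrected single-scale estimate you would instead need $\sum_k\alpha_k^2\leq C\|f\|_{\Lambda_\psi^{(2)}}^2$, which is a reverse Hardy inequality and false in general. The Littlewood--Paley cross terms would then have to be negative and large in a delicately tuned way, which there is no reason to expect.

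The paper sidesteps all of this by replacing the multi-scale sum with a single test function. Proposition~\ref{estimate from below lemma} gives the pointwise lower bound $(1-\Delta)^{-d/4}(x\circ r_d)\geq c_d'\,(Tx)\circ r_d$ for $0\leq x$ supported in $(0,1)$, where $T$ is the Hardy-type operator $(Tx)(t)=t^{-1/2}\int_0^tx+\int_t^1s^{-1/2}x(s)\,ds$. Theorem~\ref{t theorem} then produces, for each $z=\mu(z)\in\Lambda_\psi^{(2)}(0,1)$, a single decreasing $x\in L_2(0,1)$ with $Tx\geq c\,\mu(z)$ and $\|x\|_2\leq c_{\rm abs}\|z\|_{\Lambda_\psi^{(2)}}$, via the explicit formula $x(t)=t^{-1/2}y(t)$, $y(t)=\sup_{0<s<t}\psi(s)\mu(s,z)$ (Lemmas~\ref{cwikel-pustylnik lorentz lemma} and~\ref{l2 to l2 lemma}). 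Taking $u=(1-\Delta)^{-d/4}(x\circ r_d)$ there are no cross terms, and the logarithmic accounting is absorbed into the $T$-machinery. If you wish to salvage an atomic-decomposition viewpoint, the natural discrete analogue is: given $\beta_k$ with $\sum\beta_k^2/k^2<\infty$, produce $\delta_j\geq0$ with $\sum_{j<k}\delta_j\geq c\beta_k$ and $\sum_j\delta_j^2\leq C\sum_k\beta_k^2/k^2$; verifying this and translating $\delta_j$ into a step-function $x$ fed through $T$ recovers precisely the paper's argument.
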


In the next sections, we prove the assertions which substantially strengthen theorems above and extend them to arbitrary dimensions $d\ge 1$. In the course of the proof, we also recast Sobolev inequality using distribution functions. Those results are very much inspired by \cite{Cwikel-Pustylnik} {\color{blue} and our technique is a substantial improvement of that in \cite{Cwikel-Pustylnik}.}

\section{Distributional Sobolev-type inequality}

In this section, we introduce Hardy-type operator $T$ (see e.g. \cite{KPS}) and employ it as a technical tool for our distributional version of Sobolev inequality. In the second subsection, we show that our result is optimal.

\subsection{Distributional Sobolev-type inequality}

Let $T:L_2(0,1)\to L_2(0,1)$ be the operator defined by the formula
$$(Tx)(t)=t^{-\frac12}\int_0^tx(s)ds+\int_t^1\frac{x(s)ds}{s^{\frac12}},\quad x\in L_2(0,1).$$
The boundedness of $T:L_2(0,1)\to L_2(0,1)$ is guaranteed by the fact that  $T$ is actually a Hilbert-Schmidt operator. Indeed, its integral kernel is given by the formula
$$(t,s)\to t^{-\frac12}\chi_{\{s<t\}}+s^{-\frac12}\chi_{\{t<s\}},\quad 0<s,t<1,$$
which is, obviously, square-integrable.

The gist of (the critical case of the) Sobolev inequality on $\mathbb{R}^d$  may be informally understood as the boundedness of the operator $(1-\Delta)^{-\frac{d}{4}}$ from $L_2(\mathbb{R}^d)$ into a symmetric Banach function space on $\mathbb{R}^d$ which is  sufficiently close to $L_{\infty}(\mathbb{R}^d).$ We suggest to view this result as a statement concerning distribution functions of elements $(1-\Delta)^{-\frac{d}{4}}x$ and $T(\mu(x)\chi_{(0,1)})$ where $x\in L_2(\mathbb{R}^d).$

Our distributional version of Sobolev inequality is as follows:

\begin{thm}\label{dist sobolev critical} Let $d\in\mathbb{N}.$ For every $x\in L_2(\mathbb{R}^d),$ we have
$$\mu\big((1-\Delta)^{-\frac{d}{4}}x\big)\chi_{(0,1)}\prec\prec c_dT(\mu(x)\chi_{(0,1)}).$$
Here, $\prec\prec$ denotes the Hardy-Littlewood submajorization.
\end{thm}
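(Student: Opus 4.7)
My plan is to realise $A:=(1-\Delta)^{-d/4}$ as convolution on $\mathbb{R}^d$ with the Bessel kernel $G_{d/2}$, and exploit the two contrasting regimes of this kernel — the Riesz-like singularity $\sim|y|^{-d/2}$ near the origin and exponential decay at infinity — via a double splitting of both the kernel and the input. First I split the kernel as $G_{d/2}=K_1+K_2$ with $K_1:=G_{d/2}\chi_{B(0,r_d)}$ (choosing $r_d$ so that $|B(0,r_d)|=1$), giving a decomposition $A=A_1+A_2$. Independently, I split the input $x=x'+x''$ by setting $x':=x\chi_{\{|x|>\mu(x)(1)\}}$, so that $x'$ has support of measure at most $1$ and $\mu(x')\le\mu(x)\chi_{(0,1)}$, while $\|x''\|_\infty\le\mu(x)(1)$.

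The crucial estimate is for the singular-on-peak piece $A_1 x'$. Since $K_1\ge 0$ with rearrangement $K_1^*(s)\le c_d\, s^{-1/2}\chi_{(0,1)}(s)$, an application of O'Neil's convolution rearrangement lemma to the non-negative functions $K_1$ and $|x'|$ yields the pointwise bound
\[
\mu(A_1 x')(t)\le c_d\Big(t^{-1/2}\int_0^t\mu(x)(s)\,ds+\int_t^1 s^{-1/2}\mu(x)(s)\,ds\Big)=c_d\, T(\mu(x)\chi_{(0,1)})(t),\quad t\in(0,1),
\]
which is precisely the desired $T$-term. The three remaining pieces are each dispatched by a uniform $L_\infty$ estimate: for $A_2 x'$ one uses $K_2\in L_\infty(\mathbb{R}^d)$ and $x'\in L_1(\mathbb{R}^d)$ to get $\|A_2 x'\|_\infty\le c_d\int_0^1\mu(x)$; for $Ax''=G_{d/2}*x''$ one uses $G_{d/2}\in L_1(\mathbb{R}^d)$ together with $\|x''\|_\infty\le\mu(x)(1)$ to obtain $\|Ax''\|_\infty\le c_d\,\mu(x)(1)$.

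Assembling via $\mu(f+g)\prec\prec\mu(f)+\mu(g)$ and integrating gives, for $t\in(0,1)$,
\[
\int_0^t\mu(Ax)(s)\,ds\le c_d\int_0^t T(\mu(x)\chi_{(0,1)})(s)\,ds+c_d\, t\Big(\mu(x)(1)+\int_0^1\mu(x)(s)\,ds\Big).
\]
The closing absorption step is a direct computation: expanding $\int_0^t T(\mu(x)\chi_{(0,1)})(s)\,ds$ by Fubini and discarding the non-negative middle term yields the lower bound $\int_0^t T(\mu(x)\chi_{(0,1)})\ge t^{1/2}\int_0^t\mu(x)+t\int_t^1 u^{-1/2}\mu(x)(u)\,du\ge t\int_0^1\mu(x)(s)\,ds\ge t\,\mu(x)(1)$, which absorbs the extra terms into the $T$-term (up to a universal constant). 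For $t\ge 1$ both sides are determined by their values at $t=1$ since $\mu(Ax)\chi_{(0,1)}$ and $T(\mu(x)\chi_{(0,1)})$ are supported in $(0,1)$, so the submajorization is established.

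The main obstacle I anticipate is the sharp form of the O'Neil--Adams rearrangement inequality used in the $A_1 x'$ estimate with the outer integral truncated to $(0,1)$; this amounts to applying O'Neil's lemma with the compactly supported singular kernel $K_1$ and carefully tracking constants, but is otherwise a standard computation. The input splitting is the essential new idea that prevents $\|x\|_2$ from entering the bound — a pitfall of any naive approach, since $\|x\|_2$ is not controlled by $T(\mu(x)\chi_{(0,1)})$ when $\mu(x)$ has large mass outside $(0,1)$.
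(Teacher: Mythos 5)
Your proof is correct, but it takes a genuinely different route from the paper's. Both rest on O'Neil's convolution--rearrangement inequality, but you organise it via a double splitting: the kernel into its $L_{2,\infty}$ near-field piece $K_1$ and bounded far-field piece $K_2$, and the input into peak $x'$ and tail $x''$, with a pointwise O'Neil estimate for $K_1*x'$ and uniform estimates for the remaining pieces, followed by a direct absorption argument. The paper instead abstracts the whole mechanism into a single lemma (Lemma~\ref{oneil lemma}): for \emph{any} $g\in (L_{2,\infty}\cap L_1)(\mathbb{R}^d)$ one has $\mu(x\ast g)\chi_{(0,1)}\prec\prec 4\|g\|_{L_{2,\infty}\cap L_1}\,T(\mu(x)\chi_{(0,1)})$, proved via the integrated form of O'Neil's inequality and comparison of the derivative of an auxiliary function $F$ with $T\mu(x)\chi_{(0,1)}$; the splitting you do by hand in physical space is encoded there in the $L_{2,\infty}\cap L_1$ norm (the $L_{2,\infty}$ part controls the near-zero singularity, the $L_1$ part the far tail and the contribution of $\mu(s,x)$ for $s>1$). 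Your version is more elementary and concrete; the paper's version is shorter and produces a reusable lemma applicable to any such kernel. One small slip in your write-up: the Fubini expansion gives $\int_0^t Ty = 2t^{1/2}\int_0^t y - \int_0^t u^{1/2}y(u)\,du + t\int_t^1 u^{-1/2}y(u)\,du$, so the middle term is in fact \emph{negative}; the right step is to absorb it using $\int_0^t u^{1/2}y(u)\,du\le t^{1/2}\int_0^t y$, which yields exactly the lower bound you state, so the conclusion stands.
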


We need the following lemma stated in terms of convolutions.

\begin{lem}\label{oneil lemma} Let $x\in L_2(\mathbb{R}^d)$ and $g\in (L_{2,\infty}\cap L_1)(\mathbb{R}^d).$ We have
$$\mu(x\ast g)\chi_{(0,1)}\prec\prec 4\|g\|_{L_{2,\infty}\cap L_1}T(\mu(x)\chi_{(0,1)}).$$
\end{lem}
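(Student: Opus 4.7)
Set $A:=\|g\|_{L_{2,\infty}}$, $B:=\|g\|_{L_1}$, and $y:=\mu(x)\chi_{(0,1)}$. The starting point is O'Neil's classical convolution inequality, which after multiplication by $t$ takes the form
$$\int_0^t \mu(s,x\ast g)\,ds \le \Bigl(\int_0^t \mu(s,x)\,ds\Bigr)\Bigl(\int_0^t \mu(s,g)\,ds\Bigr) + t\int_t^\infty \mu(s,x)\mu(s,g)\,ds,\quad t>0.$$
The plan is to bound each of the three resulting contributions by a constant multiple of $\int_0^t T(y)(u)\,du$ for $t\in(0,1)$; the case $t\ge 1$ then follows automatically, because $\mu(x\ast g)\chi_{(0,1)}$ vanishes on $[1,\infty)$.

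For the first contribution I would use $\mu(s,g)\le As^{-1/2}$ to obtain $\int_0^t \mu(s,g)\,ds\le 2At^{1/2}$, so the term is at most $2At^{1/2}\int_0^t y$. Since $T(y)(t)\ge t^{-1/2}\int_0^t y$ and $T(y)$ is decreasing on $(0,1)$ (as one checks by differentiating the formula from Section~4), monotonicity gives $2At^{1/2}\int_0^t y\le 2A\,t\,T(y)(t)\le 2A\int_0^t T(y)(u)\,du$. For the local part of the tail, $\mu(s,g)\le As^{-1/2}$ on $(t,1)$ yields $t\int_t^1\mu(s,x)\mu(s,g)\,ds\le tA\int_t^1 s^{-1/2}y(s)\,ds\le A\int_0^t T(y)(u)\,du$, using also $T(y)(t)\ge \int_t^1 s^{-1/2}y(s)\,ds$ together with monotonicity.

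The main obstacle is the genuine tail $t\int_1^\infty \mu(s,x)\mu(s,g)\,ds$: its integrand sees values of $\mu(x)$ on $(1,\infty)$ while $T(y)$ sees only $\mu(x)|_{(0,1)}$. I would exploit $\mu(s,x)\le \mu(1,x)$ for $s\ge 1$ together with $\int_1^\infty \mu(s,g)\,ds\le B$, collapsing the tail to $tB\mu(1,x)$. Monotonicity of $\mu(x)$ gives $\mu(1,x)\le \int_0^1 y$, and since $T(y)$ is a nonnegative decreasing function on $(0,1)$ with $\int_0^1 T(y)(u)\,du=\int_0^1 (2-u^{1/2})y(u)\,du\ge \int_0^1 y$, one has $\int_0^t T(y)(u)\,du\ge t\int_0^1 T(y)(u)\,du\ge t\int_0^1 y$; hence $tB\mu(1,x)\le B\int_0^t T(y)(u)\,du$. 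Here the $L_1$-integrability of $g$ is used decisively to trade the tail for a factor of $\mu(1,x)$, which in turn is controlled by $y$ alone.

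Summing the three bounds produces $\int_0^t \mu(s,x\ast g)\,ds\le (3A+B)\int_0^t T(y)(u)\,du\le 4\|g\|_{L_{2,\infty}\cap L_1}\int_0^t T(y)(u)\,du$ for every $t\in(0,1)$, and extending the inequality to $t\ge 1$ via the $t=1$ bound gives exactly the required submajorization.
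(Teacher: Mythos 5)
Your proof is correct and follows the same overall strategy as the paper: start from O'Neil's inequality, split the tail integral at $s=1$, control $\mu(g)$ on $(0,1)$ by the $L_{2,\infty}$-quasinorm, and use $\|g\|_1$ to collapse the genuine tail $\int_1^\infty$ to a multiple of $t\mu(1,x)$. The one genuine technical divergence is the mechanism for passing from a pointwise bound to the submajorization: the paper packages the three contributions into an auxiliary function $F(t)$, differentiates, and shows $F'(t)\le 2\,(T\mu(x)\chi_{(0,1)})(t)$ (exploiting the pointwise bound $T(\mu(x)\chi_{(0,1)})(t)\ge\mu(1,x)$), then integrates back to get $F(t)\le 2\int_0^t T(\cdots)$. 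You instead observe that $T(y)$ is decreasing on $(0,1)$ --- the two $t^{-1/2}y(t)$ terms in $\frac{d}{dt}T(y)(t)$ cancel, leaving $-\tfrac12 t^{-3/2}\int_0^t y\le 0$ --- and then bound each term separately by $c\,t\,T(y)(t)\le c\int_0^t T(y)$. This monotonicity observation is a slightly more transparent route to the same estimate and avoids the explicit $F$ and its derivative; on the other hand the paper's device of verifying $T(y)(t)\ge\mu(1,x)$ directly is marginally shorter than your chain $\mu(1,x)\le\int_0^1 y\le\int_0^1 T(y)$ combined with $\int_0^t T(y)\ge t\int_0^1 T(y)$. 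Both arrive at the constant $4\|g\|_{L_{2,\infty}\cap L_1}$, your version via $3\|g\|_{2,\infty}+\|g\|_1\le 4\|g\|_{L_{2,\infty}\cap L_1}$, the paper's via $2\cdot 2$.
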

\begin{proof} Lemma 1.5 in \cite{oneil} states that
$$\int_0^t\mu(s,x\ast g)ds\leq \int_0^t\mu(s,x)ds\cdot \int_0^t\mu(s,g)ds+t\int_t^{\infty}\mu(s,x)\mu(s,g)ds.$$
Obviously,
$$\int_0^t\mu(s,x)ds\leq \|g\|_{2,\infty}\cdot \int_0^ts^{-\frac12}ds=2t^{\frac12}\|g\|_{2,\infty}\leq 2\|g\|_{L_{2,\infty}\cap L_1}\cdot t^{\frac12}.$$
For $t\in(0,1),$ we have
$$t\int_t^{\infty}\mu(s,x)\mu(s,g)ds=t\int_t^1\mu(s,x)\mu(s,g)ds+t\int_1^{\infty}\mu(s,x)\mu(s,g)ds\leq$$
$$\leq t\|g\|_{2,\infty}\int_t^1\mu(s,x)\frac{ds}{s^{\frac12}}+t\mu(1,x)\|g\|_1\leq$$
$$\leq \|g\|_{L_{2,\infty}\cap L_1}\cdot \Big(t\int_t^1\mu(s,x)\frac{ds}{s^{\frac12}}+t\mu(1,x)\Big).$$
We, therefore, have
\begin{equation}\label{oneil eq0}
\int_0^t\mu(s,x\ast g)ds\leq 2\|g\|_{L_{2,\infty}\cap L_1}\cdot F(t),\quad t\in(0,1),
\end{equation}
where
$$F(t)=t^{\frac12}\int_0^t\mu(s,x)ds+t\int_t^1\mu(s,x)\frac{ds}{s^{\frac12}}+t\mu(1,x),\quad t>0.$$
Computing the derivative, we obtain
$$F'(t)=\frac12t^{-\frac12}\int_0^t\mu(s,x)ds+\int_t^1\mu(s,x)\frac{ds}{s^{\frac12}}+\mu(1,x)\leq $$
$$\leq (T\mu(x)\chi_{(0,1)})(t)+\mu(1,x).$$
Next,
$$(T\mu(x)\chi_{(0,1)})(t)\geq t^{-\frac12}\int_0^t\mu(1,x)ds+\int_t^1\frac{\mu(1,x)ds}{s^{\frac12}}=$$
$$=\mu(1,x)\cdot (2-t^{\frac12})\geq\mu(1,x),\quad t\in(0,1).$$
Thus,
$$F'(t)\leq 2(T\mu(x)\chi_{(0,1)})(t),\quad t\in(0,1).$$
It is now immediate that
$$F(t)\leq 2\int_0^t\Big((T\mu(x)\chi_{(0,1)})(s)\Big)ds,\quad t\in(0,1).$$
Combining this equation with \eqref{oneil eq0}, we obtain
$$\int_0^t\mu(s,x\ast g)ds\leq 4\|g\|_{L_{2,\infty}\cap L_1}\int_0^t\Big((T\mu(x)\chi_{(0,1)})(s)\Big)ds,\quad t\in(0,1).$$
This is exactly the required assertion.
\end{proof}

\begin{proof}[Proof of Theorem \ref{dist sobolev critical}] We rewrite $(1-\Delta)^{-\frac{d}{4}}$ as a convolution operator. Namely,
$$(1-\Delta)^{-\frac{d}{4}}x=x\ast g,\quad x\in L_2(\mathbb{R}^d),$$
where $g$ is the Fourier transform of the function
$$t\to (1+|t|^2)^{-\frac{d}{4}},\quad t\in\mathbb{R}^d.$$
Precise expression for the function $g$ involves Macdonald function $K_{\frac{d}{4}}$ and is given in \cite{AronszajnSmith} (see formulae (2.7) and (2.10) there) as follows
$$g(t)=c_d|t|^{-\frac{d}{4}}K_{\frac{d}{4}}(|t|),\quad t\in\mathbb{R}^d.$$

Let $\mathbb{B}^d$ be the unit ball in $\mathbb{R}^d.$ If $d\neq0{\rm mod}4,$ it follows from formulae (9.6.2) and (9.6.10) in \cite{AbramovitzStegun} that $g\chi_{\mathbb{B}^d}\in L_{2,\infty}(\mathbb{B}^d)\subset L_1(\mathbb{B}^d).$ If $d=0{\rm mod}4,$  formula (9.6.11) in \cite{AbramovitzStegun} yields that $g\chi_{\mathbb{B}^d}\in L_{2,\infty}(\mathbb{B}^d)\subset L_1(\mathbb{B}^d).$ Also, $g\chi_{\mathbb{R}^d\backslash\mathbb{B}^d}\in (L_1\cap L_{\infty})(\mathbb{R}^d\backslash\mathbb{B}^d)$ by formula (9.7.2) in \cite{AbramovitzStegun}. Thus, $g\in (L_{2,\infty}\cap L_1)(\mathbb{R}^d).$ The assertion follows from Lemma \ref{oneil lemma}.
\end{proof}

\subsection{Optimality of the distributional Sobolev-type inequality }

We start with the following easy observation.

\begin{lem}\label{spherical rearrangement lemma} For $x\in S(0,\infty),$ we have
$$\mu(x\circ r_d)=\sigma_{\omega_d}\mu(x),\quad r_d(t)=|t|^d,\quad t\in\mathbb{R}^d,$$
where $\omega_d$ is the volume of the unit ball in $\mathbb{R}^d.$
\end{lem}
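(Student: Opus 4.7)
The plan is to reduce everything to the distribution function and then read off the decreasing rearrangement. Recall that for a measurable function $f$ on a measure space, $\mu(u,f)=\inf\{\lambda>0:d_f(\lambda)\leq u\}$, where $d_f$ is the distribution function of $|f|$. So it suffices to show that
\[
d_{x\circ r_d}(\lambda)\;=\;\omega_d\cdot d_x(\lambda),\qquad \lambda>0,
\]
where on the left the distribution function is taken with respect to Lebesgue measure on $\mathbb{R}^d$ and on the right with respect to Lebesgue measure on $\mathbb{R}_+$. Once this is established, the definition of the dilation $\sigma_{\omega_d}$ immediately gives
\[
\mu(u,x\circ r_d)=\inf\{\lambda>0:\omega_d d_x(\lambda)\leq u\}=\mu(u/\omega_d,x)=(\sigma_{\omega_d}\mu(x))(u).
\]

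To compute $d_{x\circ r_d}(\lambda)$, I would set $E_\lambda=\{s>0:|x(s)|>\lambda\}\subset\mathbb{R}_+$, so that $\{t\in\mathbb{R}^d:|(x\circ r_d)(t)|>\lambda\}=\{t\in\mathbb{R}^d:|t|^d\in E_\lambda\}$. Passing to polar coordinates on $\mathbb{R}^d$, the measure of this set equals $d\omega_d\int_0^\infty \chi_{E_\lambda}(r^d)\,r^{d-1}\,dr$. The substitution $s=r^d$, $ds=d\,r^{d-1}\,dr$, turns this into $\omega_d\int_0^\infty\chi_{E_\lambda}(s)\,ds=\omega_d\cdot d_x(\lambda)$, giving the desired identity.

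The argument is essentially a single change of variables, so there is no real obstacle; the only point requiring a little care is verifying that $x\in S(0,\infty)$ is enough to make both distribution functions well-defined and the sets $E_\lambda$ measurable, which follows directly from the hypothesis that $|x|\chi_{\{|x|>n\}}$ has finite-measure support for some $n$ (the analogous property for $x\circ r_d$ transfers through the same polar-coordinate computation).
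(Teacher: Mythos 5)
Your proof is correct and takes essentially the same route as the paper: both arguments come down to the fact that $r_d$ pushes Lebesgue measure on $\mathbb{R}^d$ forward to $\omega_d$ times Lebesgue measure on $\mathbb{R}_+$. The paper verifies this directly on intervals $(a,b)$ and extends to arbitrary measurable sets, while you express the same computation through the distribution function and a polar-coordinate change of variables; the difference is only in presentation.
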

\begin{proof}  Indeed, for every interval $(a,b),$ we have
$$m(r_d^{-1}(a,b))=m(\{t\in\mathbb{R}^d:\ a<|t|^d<b\})=$$
$$=m(\{t\in\mathbb{R}^d:\ |t|<b^{\frac1d}\})-m(\{t\in\mathbb{R}^d:\ |t|<a^{\frac1d}\})=\omega_d\cdot (b-a).$$
Hence, for every set $A\subset(0,\infty),$ we have
$$m(r_d^{-1}(A))=\omega_dm(A).$$
In other words, the mapping $r_d:\mathbb{R}^d\to\mathbb{R}$ preserves a measure (modulo a constant factor $\omega_d$). This suffices to conclude the argument.
\end{proof}

The following proposition shows (with the help of Lemma \ref{spherical rearrangement lemma}) that Theorem \ref{dist sobolev critical} is optimal.
\begin{prop}\label{estimate from below lemma}  There exists a strictly positive constant $c_d'$ depending only on $d$ such that 
$$(1-\Delta)^{-\frac{d}{4}}(x\circ r_d)\geq c_d'(Tx)\circ r_d$$
for every $0\leq x\in L_2(0,\infty)$ supported in the interval $(0,1).$
\end{prop}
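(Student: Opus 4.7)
The plan is to prove the inequality pointwise for $t$ in the open unit ball $\mathbb{B}^d$, which is the natural domain of $(Tx)\circ r_d$ since $x$ is supported in $(0,1)$. Starting from the convolution representation $(1-\Delta)^{-d/4}(x\circ r_d) = g\ast(x\circ r_d)$ with $g(s) = c_d|s|^{-d/4}K_{d/4}(|s|)$ as in the proof of Theorem \ref{dist sobolev critical}, my first step would be to extract the pointwise lower bound $g(s) \geq c|s|^{-d/2}$ on $\{|s|\leq 2\}$. This follows from positivity and continuity of $K_{d/4}$ on $(0,\infty)$ combined with the standard asymptotic $K_{d/4}(s)\sim \tfrac{1}{2}\Gamma(d/4)(s/2)^{-d/4}$ as $s\to 0^+$, valid for every $d\geq 1$ with no logarithmic correction since $d/4 > 0$. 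Because the support of $x\circ r_d$ is contained in $\mathbb{B}^d$ and $|t-s|\leq 2$ whenever $t,s\in\mathbb{B}^d$, this yields
\[
\bigl((1-\Delta)^{-d/4}(x\circ r_d)\bigr)(t) \geq c\int_{\mathbb{B}^d}|t-s|^{-d/2}\, x(|s|^d)\, ds.
\]

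Next, exploiting the radial symmetry, I would write $t = re_1$ with $r = |t|$ and parametrise $s = \rho\sigma$, $\rho\in(0,1)$, $\sigma\in S^{d-1}$, so the right-hand side becomes
\[
\int_0^1 x(\rho^d)\rho^{d-1}\, J(r,\rho)\, d\rho,\qquad J(r,\rho) := \int_{S^{d-1}}|re_1-\rho\sigma|^{-d/2}\, d\sigma.
\]
The key observation is that the crude triangle-inequality bound $|re_1-\rho\sigma|\leq r+\rho$ immediately gives $J(r,\rho)\geq |S^{d-1}|(r+\rho)^{-d/2}$. The virtue of this coarse estimate is that it sidesteps the spherical singularity at $\rho\approx r$ which is non-integrable in low dimensions ($d=1,2$), while still being enough for what follows.

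Finally, the substitution $v = \rho^d$ (with $u = r^d$) converts the radial integral into $\tfrac{1}{d}\int_0^1 (u^{1/d}+v^{1/d})^{-d/2}\, x(v)\, dv$, and the elementary inequality $(u^{1/d}+v^{1/d})^{-d/2}\geq 2^{-d/2}\max(u,v)^{-1/2}$ splits this into exactly the two pieces of $(Tx)(u)$:
\[
u^{-1/2}\int_0^u x(v)\, dv + \int_u^1 v^{-1/2}x(v)\, dv.
\]
Collecting the constants produces an explicit $c_d' = c\cdot|S^{d-1}|\cdot 2^{-d/2}/d$, completing the argument. I do not foresee a serious obstacle here; the one point requiring care is the uniform-in-$d$ kernel lower bound near the origin, and the deliberately simple choice of bound on $J(r,\rho)$ is exactly what removes every dimension-dependent integrability issue on the sphere.
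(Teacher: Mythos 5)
Your argument is correct and follows essentially the same route as the paper: convolution representation of $(1-\Delta)^{-d/4}$ with the Bessel kernel, the pointwise lower bound $g(s)\geq c_d|s|^{-d/2}$ on $\{|s|\leq 2\}$, restriction to $|t|<1$, spherical coordinates with the substitution $u=r^d$, $v=\rho^d$, and the bound $|t-s|\leq|t|+|s|\leq 2\max\{|t|,|s|\}$ to split the radial integral into the two pieces of $(Tx)(u)$. The only cosmetic difference is that you apply the triangle inequality inside the angular integral $J(r,\rho)$ before substituting, whereas the paper applies it to the Cartesian integrand first and then passes to spherical coordinates; the arithmetic is identical.
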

\begin{proof} As in the proof of Theorem \ref{dist sobolev critical} above, we rewrite $(1-\Delta)^{-\frac{d}{4}}$ as a convolution operator. Namely,
$$(1-\Delta)^{-\frac{d}{4}}z=z\ast g,\quad z\in L_2(\mathbb{R}^d)$$
where $g$ is given by the formula
$$g(t)=c_d|t|^{-\frac{d}{4}}K_{\frac{d}{4}}(|t|),\quad t\in\mathbb{R}^d.$$
By formula (9.6.23) in \cite{AbramovitzStegun}, this is a strictly positive function.	
	
Let $\mathbb{B}^d$ be the unit ball in $\mathbb{R}^d.$ Since $g$ is strictly positive, it follows from the formulae (9.6.2) and (9.6.10) in \cite{AbramovitzStegun} (when $d\neq0{\rm mod}4$) or from the formula (9.6.11) in \cite{AbramovitzStegun} (when $d=0{\rm mod}4$) that
$$g(t)\geq c_d'|t|^{-\frac{d}{2}},\quad 0<|t|<2.$$
	
Since $x\geq0,$ it follows that
$$\big((1-\Delta)^{-\frac{d}{4}}(x\circ r_d)\big)(t)=((x\circ r_d)\ast g)(t)\geq c_d'\int_{|t-s|<2}|t-s|^{-\frac{d}{2}}x(|s|^d)ds.$$
When $|t|<1,$ we have
$$\Big\{s\in\mathbb{R}^d:\quad |s|<1\Big\}\subset \Big\{s\in\mathbb{R}^d: |t-s|<2\Big\}.$$
Thus,
$$\big((1-\Delta)^{-\frac{d}{4}}(x\circ r_d)\big)(t)\geq c_d'\int_{|s|<1}|t-s|^{-\frac{d}{2}}x(|s|^d)ds,\quad |t|<1.$$
Obviously,
$$|t-s|\leq|t|+|s|\leq2\max\{|t|,|s|\}$$
and, therefore,
$$|t-s|^{-\frac{d}{2}}\geq 2^{-\frac{d}{2}}\min\{|t|^{-\frac{d}{2}},|s|^{-\frac{d}{2}}\}.$$
It follows that
$$\big((1-\Delta)^{-\frac{d}{4}}(x\circ r_d)\big)(t)\geq 2^{-\frac{d}{2}}c_d'\int_{|s|<1}\min\{|t|^{-\frac{d}{2}},|s|^{-\frac{d}{2}}\}x(|s|^d)ds,\quad |t|<1.$$

Passing to spherical coordinates, we obtain
$$\int_{|s|<1}\min\{|t|^{-\frac{d}{2}},|s|^{-\frac{d}{2}}\}x(|s|^d)ds=$$
$$=\int_0^1\min\{|t|^{-\frac{d}{2}},r^{-\frac{d}{2}}\}r^{d-1}x(r^d)dr\cdot\int_{\mathbb{S}^{d-1}}ds.$$
Making the substitution $r^d=u,$ we write
$$\int_0^1\min\{|t|^{-\frac{d}{2}},r^{-\frac{d}{2}}\}r^{d-1}x(r^d)dr=\frac1d\int_0^1\min\{|t|^{-\frac{d}{2}},u^{-\frac12}\}x(u)du=$$
$$=\frac1d\Big(\int_0^{|t|^d}|t|^{-\frac{d}{2}}x(u)du+\int_{|t|^d}^1u^{-\frac12}x(u)du\Big)=\frac1d(Tx)(|t|^d).$$
Combining the last $2$ equations, we complete the proof.
\end{proof}

\section{Sobolev embedding theorem in arbitrary dimension}

In this section, we extend Theorems \ref{thm 1} (see Proposition \ref{sobolev critical all dimension}) and \ref{thm 2} (see Corollary \ref{optimal embedding theorem}) to Euclidean spaces of arbitrary dimension. We provide new proofs of both theorems in their original setting (for both even and odd $d$).

In what follows, we extend $\psi$ to a concave increasing function on $(0,\infty)$ by setting
$$
\psi(t)=
\begin{cases}
\frac1{\log(\frac{e}{t})},& t\in(0,1)\\
t,& t\in[1,\infty).
\end{cases}
$$

\begin{prop}\label{sobolev critical all dimension} Let $d\in\mathbb{N}.$ For every $x\in L_2(\mathbb{R}^d),$ we have
$$\|(1-\Delta)^{-\frac{d}{4}}x\|_{\Lambda_{\psi}^{(2)}}\leq c_d\|x\|_2.$$
\end{prop}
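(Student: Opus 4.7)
The strategy is to combine the distributional Sobolev inequality of Theorem \ref{dist sobolev critical} with a sharp mapping property of the Hardy-type operator $T$. Writing $f = (1-\Delta)^{-d/4} x$, the Fourier multiplier $(1+|\xi|^2)^{-d/4}$ is pointwise bounded by $1$, hence $\|f\|_2 \leq \|x\|_2$. Since $\mu(|f|^2) = \mu(f)^2$ and $\psi'(t) = 1$ for $t > 1$, the norm squared decomposes as
$$\|f\|_{\Lambda_\psi^{(2)}}^2 = \int_0^1 \mu(f)^2\,d\psi + \int_1^\infty \mu(f)^2\,dt,$$
and the tail is already controlled by $\int_1^\infty \mu(f)^2\,dt \leq \|f\|_2^2 \leq \|x\|_2^2$.

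For the main piece on $(0,1)$, I would apply Theorem \ref{dist sobolev critical} to get $\mu(f)\chi_{(0,1)} \prec\prec c_d\, T(\mu(x)\chi_{(0,1)})$. Since $s\mapsto s^2$ is convex increasing with $0 \mapsto 0$, it preserves Hardy--Littlewood submajorization of non-negative decreasing functions; and because $\Lambda_\psi$ is fully symmetric, its norm is monotone under $\prec\prec$. Applied to the squares this yields
$$\int_0^1 \mu(f)^2\,d\psi \leq c_d^2 \int_0^1 \bigl(T(\mu(x)\chi_{(0,1)})\bigr)^2\, d\psi,$$
so the proposition reduces to the boundedness of $T$ from $L_2(0,1)$ into $\Lambda_\psi^{(2)}(0,1)$.

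For this last step I would split $Ty = T_1 y + T_2 y$ with $T_1 y(t) = t^{-1/2}\int_0^t y\,ds$ and $T_2 y(t) = \int_t^1 y(s) s^{-1/2}\,ds$. The $T_1$-part is immediate from the classical Hardy inequality, since $d\psi(t) = dt/(t(\log(e/t))^2) \leq dt/t$ on $(0,1)$. The main obstacle, and the heart of the argument, is the $T_2$-part. The decisive observation is the algebraic identity
$$\psi(t)^2 = t\,\psi'(t),\qquad t\in(0,1),$$
which follows at once from $\psi(t) = 1/\log(e/t)$. Integrating $\int_0^1 (T_2 y)^2\,d\psi$ by parts --- boundary terms vanish since $T_2 y(1) = 0$ and $\psi(0) = 0$ --- using $(T_2 y)' = -y(t)/\sqrt{t}$, and then applying Cauchy--Schwarz gives
$$\int_0^1 (T_2 y)^2\, d\psi \leq 2\Bigl(\int_0^1 (T_2 y)^2 \frac{\psi^2}{t}\,dt\Bigr)^{\!1/2}\|y\|_2.$$
The identity $\psi^2/t = \psi'$ collapses the right-hand integral back to $\int_0^1 (T_2 y)^2\,d\psi$, and dividing yields $\int_0^1 (T_2 y)^2\,d\psi \leq 4\|y\|_2^2$. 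Combining everything produces the desired estimate, the one delicate step being this self-improving Cauchy--Schwarz argument powered by the identity $\psi^2 = t\psi'$.
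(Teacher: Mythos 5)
Your overall skeleton matches the paper's: both reduce the proposition to (i) the distributional estimate $\mu(f)\chi_{(0,1)} \prec\prec c_d\,T(\mu(x)\chi_{(0,1)})$ from Theorem~\ref{dist sobolev critical}, combined with (ii) the boundedness of $T:L_2(0,1)\to\Lambda_\psi^{(2)}(0,1)$, together with the trivial $L_2$-bound $\|f\|_2\leq\|x\|_2$ to handle what lives on $(1,\infty)$. Where you genuinely diverge is in the proof of (ii). The paper (Lemma~\ref{t lemma}, resting on Lemma~\ref{real interpolation lemma}) establishes the two endpoint bounds $T:L_{2,1}\to L_\infty$ and $T:L_{2,\infty}\to\exp(L_1)$ and then real-interpolates, identifying $[L_{2,\infty},L_{2,1}]_{\frac12,2}=L_2$ and showing $[\mathtt{M}_\phi,L_\infty]_{\frac12,2}\subset\Lambda_\psi^{(2)}$ via a $K$-functional computation. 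Your proof is more elementary and fully self-contained: split $T=T_1+T_2$, treat $T_1$ by the classical Hardy inequality after the pointwise bound $\psi'(t)\leq 1/t$, and treat $T_2$ by integration by parts plus a self-improving Cauchy--Schwarz driven by the algebraic identity $\psi'(t)=\psi(t)^2/t$. Both routes are correct; yours avoids the interpolation machinery altogether, while the paper's version makes the endpoint structure ($L_{2,1}\to L_\infty$ and $L_{2,\infty}\to\exp(L_1)$) explicit, which is useful elsewhere. Two small technical points you should make explicit: the claim that $\prec\prec$ is preserved under squaring applies because $T(\mu(x)\chi_{(0,1)})$ is itself decreasing (check the sign of its derivative) and $s\mapsto s^2$ is convex increasing vanishing at $0$, so the argument goes through since $\Lambda_\psi$ is fully symmetric; and in the integration by parts the vanishing of the boundary term at $0$ and the a priori finiteness of $\int_0^1(T_2 y)^2\,d\psi$ (needed to divide through after Cauchy--Schwarz) require a preliminary reduction to bounded $y$ followed by a density/Fatou passage, since for general $y\in L_2$ one has $(T_2 y)(0)^2\psi(0)$ in the indeterminate form $\infty\cdot 0$.
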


With some effort, the crucial lemma below can be inferred from the proof of Theorem 5.1 in \cite{Cwikel-Pustylnik-JFA}. We provide a direct proof. For the definition of real interpolation method employed below we refer the reader to \cite{BerghLoefstrom}.
\begin{lem}\label{real interpolation lemma} We have
$$[\mathtt{M}_{\phi},L_{\infty}]_{\frac12,2}\subset\Lambda_{\psi}^{(2)}.$$
\end{lem}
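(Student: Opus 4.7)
The approach is to use the $K$-functional characterization of the real interpolation space,
$$\|x\|_{[\mathtt{M}_\phi, L_\infty]_{1/2, 2}}^2 \;\asymp\; \int_0^\infty K(t, x; \mathtt{M}_\phi, L_\infty)^2 \frac{dt}{t^2},$$
and to bound this integral from below by $\|x\|_{\Lambda_\psi^{(2)}}^2 = \int_0^1 \mu(s, x)^2 \,\psi'(s)\,ds$. The whole argument hinges on a single pointwise $K$-functional lower bound of the form
$$K(t, x; \mathtt{M}_\phi, L_\infty) \;\ge\; \frac{\min(t, \psi(s))}{s} \int_0^s \mu(u, x)\, du$$
valid for every $s \in (0, 1)$ and $t > 0$.

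To derive this bound, I would take an arbitrary decomposition $x = y + z$ with $y \in \mathtt{M}_\phi$ and $z \in L_\infty$ and combine sub-additivity of the Hardy--Littlewood functional with the definitions of the two norms:
$$\int_0^s \mu(u, x)\,du \;\le\; \int_0^s \mu(u, y)\,du + \int_0^s \mu(u, z)\,du \;\le\; \phi(s)\|y\|_{\mathtt{M}_\phi} + s\|z\|_\infty.$$
Dividing by $s$ and using $\phi(s)/s = 1/\psi(s)$ gives $s^{-1}\int_0^s \mu(u, x)\,du \le \|y\|_{\mathtt{M}_\phi}/\psi(s) + \|z\|_\infty$. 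A brief two-case comparison between $\|y\|_{\mathtt{M}_\phi} + t\|z\|_\infty$ and $\|y\|_{\mathtt{M}_\phi} + \psi(s)\|z\|_\infty$ -- direct monotonicity when $t \ge \psi(s)$, and multiplication through by $t/\psi(s) \le 1$ when $t < \psi(s)$ -- then yields the displayed bound after passing to the infimum over decompositions.

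Granting this, I would specialize $s = \psi^{-1}(t)$ for $t \in (0, 1)$; this is legitimate since $\psi:(0,1)\to(0,1)$ is a continuous bijection. Then $\min(t, \psi(s)) = t$, and since the running average of the decreasing function $\mu(\cdot, x)$ dominates its endpoint value, $K(t, x)/t \ge \mu(\psi^{-1}(t), x)$. Squaring, integrating in $t$, and substituting $t = \psi(s)$ so that $dt = \psi'(s)\,ds$ yields
$$\int_0^\infty K(t, x)^2 \frac{dt}{t^2} \;\ge\; \int_0^1 \mu(\psi^{-1}(t), x)^2\, dt \;=\; \int_0^1 \mu(s, x)^2 \,\psi'(s)\, ds \;=\; \|x\|_{\Lambda_\psi^{(2)}}^2,$$
which is precisely the desired inclusion.

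I expect the delicate step to be the pointwise $K$-functional lower bound, and specifically the idea of comparing first against the auxiliary quantity $\|y\|_{\mathtt{M}_\phi} + \psi(s)\|z\|_\infty$ rather than directly against $\|y\|_{\mathtt{M}_\phi} + t\|z\|_\infty$; the remainder is a straightforward change of variables exploiting that $\psi$ is a bijection of $(0, 1)$ onto itself, so I do not anticipate further obstacles.
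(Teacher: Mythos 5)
Your proof is correct, and it takes a genuinely different route from the paper's. Both arguments fix $t\in(0,1)$, set $s=\psi^{-1}(t)$, and seek a lower bound $K(t,x;\mathtt{M}_\phi,L_\infty)\gtrsim t\,\mu(s,x)$, but the mechanisms are different. The paper works with the level sets $A=\{|x|\ge\mu(s,x)\}$, $B=\{|f|\ge\tfrac12\mu(s,x)\}$, $C=\{|g|\ge\tfrac12\mu(s,x)\}$ attached to a decomposition $x=f+g$; from $A\subset B\cup C$ and $m(A)\ge s$ it deduces that one of $B,C$ has measure $\ge m(A)/2$, and then estimates $\|f\|_{\mathtt{M}_\phi}$ (via the fundamental function $\psi$) or $t\|g\|_\infty$ accordingly, arriving at $K(t,x)\ge\tfrac14\,t\,\mu(s,x)$. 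You instead apply subadditivity of the Hardy--Littlewood functional $x\mapsto\int_0^s\mu(u,x)\,du$ to the same decomposition, use the \emph{defining} estimate $\int_0^s\mu(u,y)\,du\le\phi(s)\|y\|_{\mathtt{M}_\phi}$ together with $\int_0^s\mu(u,z)\,du\le s\|z\|_\infty$, and pass to the infimum. After the specialization $s=\psi^{-1}(t)$, the identity $\phi(s)/s=1/\psi(s)=1/t$ collapses your two-case comparison (one is exactly at the boundary $t=\psi(s)$), giving directly $K(t,x)\ge\tfrac{t}{s}\int_0^s\mu(u,x)\,du\ge t\,\mu(s,x)$. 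Your route avoids the measure-theoretic case-split, yields the cleaner constant $1$ rather than $1/4$, and in fact retains the running average $\tfrac1s\int_0^s\mu(u,x)\,du$ as a slightly stronger intermediate bound. One small stylistic point: the general inequality with $\min(t,\psi(s))$ is more than you need, since you immediately specialize $s$ so that $\min(t,\psi(s))=t$; you could state the bound directly at $s=\psi^{-1}(t)$ and skip the auxiliary comparison altogether.
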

\begin{proof} Let $t\in(0,1)$ and set $s=\psi^{-1}(t).$
	
Let $x\in\mathtt{M}_{\phi}+L_{\infty}$ and let $x=f+g,$ where $f\in\mathtt{M}_{\phi}$ and where $g\in L_{\infty}.$ Set
$$A=|x|^{-1}([\mu(s,x),\infty)),\quad B=|f|^{-1}([\frac12\mu(s,x),\infty)),\quad C=|g|^{-1}([\frac12\mu(s,x),\infty)).$$
Note that $m(A)\geq s.$

We have
$$|f|(u)<\frac12\mu(s,x),\quad |g|(u)<\frac12\mu(s,x),\quad u\in B^c\cap C^c.$$
Thus,
$$|x|(u)\leq |f|(u)+|g|(u)<\mu(s,x),\quad u\in B^c\cap C^c.$$
So, $u\in B^c\cap C^c$ implies that $u\in A^c.$ That is, $B^c\cap C^c\subset A^c$ or, equivalently, $A\subset B\cup C.$
	
If $m(B)\geq\frac12 m(A),$ then
$$\|f\|_{M_{\phi}}+t\|g\|_{L_{\infty}} \geq \|f\chi_B\|_{M_{\phi}} \geq \frac12\mu(s,x) \|\chi_B\|_{M_{\phi}} =$$
$$= \frac12\mu(s,x) \psi(m(B)) \geq \frac14\mu(s,x)\phi(m(A)) \geq \frac14 t\mu(s,x).$$
If $m(C)\geq \frac12 m(A),$ then
$$\|f\|_{M_{\phi}}+t\|g\|_{L_{\infty}} \geq t\|g\chi_C\|_{L_{\infty}} \geq \frac12t\mu(s,x).$$
In either case, we have
$$K(t,x,\mathtt{M}_{\phi},L_{\infty})\geq \frac14t\mu(s,x).$$
	
Thus,
$$\|x\|_{[\mathtt{M}_{\phi},L_{\infty}]_{\frac12,2}}^2=\int_0^1 (\frac1tK(t,x,\mathtt{M}_{\phi},L_{\infty}))^2 dt \geq$$
$$\geq\frac1{16}\int_0^1 \mu^2(s,x)dt \stackrel{t=\psi(s)}{=}\frac1{16} \int_0^1\mu^2(s,x)d\psi(s).$$
\end{proof}

The following lemma shows that the receptacle of the operator $T$ is strictly smaller than $\exp(L_2)$ (the space suggested by the Moser-Trudinger inequality).

\begin{lem}\label{t lemma} We have $T:L_2(0,1)\to\Lambda_{\psi}^{(2)}(0,1).$ 
\end{lem}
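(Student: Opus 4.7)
The plan is to prove $\|Tx\|_{\Lambda_\psi^{(2)}} \leq C\|x\|_{2}$ by a direct weighted-Hardy argument, after first reducing to nonnegative decreasing $x$. The integral kernel $k(s,t) = \max(s,t)^{-1/2}$ of $T$ is, for each fixed $t$, already decreasing in $s$, so the Hardy-Littlewood inequality gives $T|x|(t) \leq T(\mu(|x|))(t)$, and combined with $|Tx|\leq T|x|$ this yields $|Tx(t)|\leq Ty(t)$ with $y := \mu(|x|)$. Moreover $Ty$ is itself decreasing, since $\tfrac{d}{dt}(Ty)(t) = -\tfrac12 t^{-3/2}\int_0^t y(s)\,ds \leq 0$. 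Hence $\mu(s,Tx)\leq Ty(s)$ for all $s\in(0,1)$, and because $\|y\|_{2}=\|x\|_{2}$, it suffices to prove
\[
\int_0^1 (Ty)(t)^2\,d\psi(t) \leq C\,\|y\|_{2}^2
\]
for $0 \le y \in L_2(0,1)$.

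Next I split $Ty = T_1 y + T_2 y$, where $T_1 y(t) = t^{-1/2}\int_0^t y(s)\,ds$ and $T_2 y(t) = \int_t^1 y(s)s^{-1/2}\,ds$. A Cauchy-Schwarz estimate gives $T_1 y(t) \leq \|y\|_{2}$ pointwise, whence $\int_0^1 (T_1y)^2\,d\psi \leq \|y\|_{2}^2\,\psi(1) = \|y\|_{2}^2$. For $T_2$ I invoke Muckenhoupt's weighted Hardy inequality: the estimate
\[
\int_0^1\Bigl(\int_t^1 f(s)\,ds\Bigr)^{2} w(t)\,dt \leq C \int_0^1 f(s)^{2} v(s)\,ds
\]
holds whenever $A := \sup_{t\in(0,1)}\bigl(\int_0^t w\bigr)^{1/2}\bigl(\int_t^1 v^{-1}\bigr)^{1/2}$ is finite. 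I apply it with $f(s) = y(s)\,s^{-1/2}$, $w(t) = (t\log^2(e/t))^{-1}$, and $v(s) = s$, so the right-hand side becomes $C\|y\|_{2}^{2}$.

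The Muckenhoupt hypothesis then reduces to the elementary identities
\[
\int_0^t \frac{dr}{r\log^2(e/r)} = \frac{1}{\log(e/t)}, \qquad \int_t^1 \frac{ds}{s} = \log\frac{1}{t},
\]
whose product is $\log(1/t)/\log(e/t) \leq 1$ for every $t \in (0,1)$, so $A \leq 1$. Combining the $T_1$ and $T_2$ bounds with the reduction above yields the lemma. The only non-routine ingredient is this verification of the Muckenhoupt hypothesis; the cancellation $\log(1/t)/\log(e/t) \leq 1$ is exactly the place where the specific shape of $\psi$ matches the backward Hardy operator $T_2$. I do not anticipate a serious obstacle beyond this, since Muckenhoupt's inequality is classical and everything else is Cauchy-Schwarz and standard manipulations with decreasing rearrangements.
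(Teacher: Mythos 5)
Your proof is correct, and it takes a genuinely different route from the one in the paper. The paper first establishes two endpoint estimates, $T\colon L_{2,1}(0,1)\to L_{\infty}(0,1)$ and $T\colon L_{2,\infty}(0,1)\to \exp(L_1)(0,1)$, and then applies the real interpolation functor $[\cdot,\cdot]_{1/2,2}$ together with the auxiliary embedding $[\mathtt{M}_{\phi},L_{\infty}]_{1/2,2}\subset\Lambda_{\psi}^{(2)}$ proved in Lemma \ref{real interpolation lemma}; the chain $[L_{2,\infty},L_{2,1}]_{1/2,2}=L_2$ then closes the argument. You instead stay entirely at the $L_2$ level: after the (correct) reduction to nonnegative decreasing $y=\mu(x)$ via the Hardy--Littlewood inequality for the decreasing kernel $\max(s,t)^{-1/2}$ and the observation that $Ty$ is itself decreasing, you reduce the claim to the weighted inequality $\int_0^1(Ty)^2\,d\psi\leq C\|y\|_2^2$, dispose of the forward-Hardy piece $T_1$ by Cauchy--Schwarz, and treat the backward-Hardy piece $T_2$ by Muckenhoupt's two-weight Hardy inequality; the computation $\int_0^t\psi'=\psi(t)$ and $\int_t^1 s^{-1}\,ds=\log(1/t)$, with $\psi(t)\log(1/t)\leq1$, verifies the Muckenhoupt condition. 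Your approach is shorter and more self-contained at this point in the paper (it bypasses Lemma \ref{real interpolation lemma} entirely, at the cost of invoking Muckenhoupt as a black box), whereas the paper's interpolation route exposes the two endpoint mapping properties of $T$ and the structural fact $[\mathtt{M}_{\phi},L_{\infty}]_{1/2,2}\subset\Lambda_{\psi}^{(2)}$, which carry independent interest and reinforce the role of the spaces $\mathtt{M}_\phi=\exp(L_1)$ and $\Lambda_\psi^{(2)}$ that recur throughout the paper.
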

\begin{proof} Let $x\in L_{2,1}(0,1).$ It is immediate that
$$|(Tx)(t)|\leq t^{-\frac12}\int_0^t|x(s)|ds+\int_t^1|x(s)|\frac{ds}{s^{\frac12}}\leq$$
$$\leq t^{-\frac12}\int_0^t\mu(s,x)ds+\int_0^1|x(s)|\frac{ds}{s^{\frac12}}\leq t^{-\frac12}\int_0^t\mu(s,x)ds+\int_0^1\mu(s,x)\frac{ds}{s^{\frac12}}=$$
$$=t^{-\frac12}\int_0^t\mu(s,x)ds+2\|x\|_{2,1}\leq t^{-\frac12}\|x\|_{2,\infty}\int_0^t\frac{ds}{s^{\frac12}}+2\|x\|_{2,1}=$$
$$=2\|x\|_{2,\infty}+2\|x\|_{2,1}\leq c_{{\rm abs}}\|x\|_{2,1}.$$
Thus,
$$\|T\|_{L_{2,1}\to L_{\infty}}\leq c_{{\rm abs}}.$$

Let $x\in L_{2,\infty}(0,1).$ It is immediate that
$$|(Tx)(t)|\leq\int_0^1|x(s)|\cdot\min\{t^{-\frac12},s^{-\frac12}\}ds\leq$$
$$\leq\int_0^1\mu(s,x)\cdot\min\{t^{-\frac12},s^{-\frac12}\}ds\leq\|x\|_{2,\infty}\int_0^1s^{-\frac12}\cdot\min\{t^{-\frac12},s^{-\frac12}\}ds.$$
Obviously,
$$\int_0^1s^{-\frac12}\cdot\min\{t^{-\frac12},s^{-\frac12}\}ds=t^{-\frac12}\int_0^t\frac{ds}{s^{\frac12}}+\int_t^1\frac{ds}{s}=\log(\frac{e^2}{t}).$$
That is,
$$|(Tx)(t)|\leq\|x\|_{2,\infty}\log(\frac{e^2}{t}),\quad t\in(0,1).$$
Since the mapping $t\to\log(\frac{e^2}{t}),$ $t\in(0,1),$ falls into $\exp(L_1)(0,1),$ it follows that
$$\|T\|_{L_{2,\infty}\to \exp(L_1)}\leq c_{{\rm abs}}.$$

By real interpolation, we have
$$T:[L_{2,\infty},L_{2,1}]_{\frac12,2}\to [\exp(L_1),L_{\infty}]_{\frac12,2}$$
is a bounded mapping. By Lemma \ref{real interpolation lemma}, we have
$$[L_{2,\infty},L_{2,1}]_{\frac12,2}=L_2,\quad [\exp(L_1),L_{\infty}]_{\frac12,2}\stackrel{L.\ref{real interpolation lemma}}{\subset}\Lambda_{\psi}^{(2)}.$$
Thus, $T:L_2(0,1)\to\Lambda_{\psi}^{(2)}(0,1)$ is a bounded mapping.
\end{proof}

\begin{proof}[Proof of Proposition \ref{sobolev critical all dimension}] By Theorem \ref{dist sobolev critical} and Lemma \ref{t lemma}, we have
$$\Big\|\mu\big((1-\Delta)^{-\frac{d}{4}}x\big)\chi_{(0,1)}\Big\|_{\Lambda_{\psi}^{(2)}}\leq c_d\|x\|_2.$$
In other words,
$$\Big\|(1-\Delta)^{-\frac{d}{4}}x\Big\|_{\Lambda_{\psi}^{(2)}+L_{\infty}}\leq c_d\|x\|_2.$$
On the other hand, we have
$$\Big\|(1-\Delta)^{-\frac{d}{4}}x\Big\|_2\leq \|x\|_2.$$
Thus,
$$\Big\|(1-\Delta)^{-\frac{d}{4}}x\Big\|_{(\Lambda_{\psi}^{(2)}+L_{\infty})\cap L_2}\leq c_d\|x\|_2.$$
Obviously,
$$(\Lambda_{\psi}^{(2)}+L_{\infty})\cap L_2=\Lambda_{\psi}^{(2)}$$
and the assertion follows.
\end{proof}

The next assertion should be compared with Theorem 4 in \cite{Cwikel-Pustylnik} (it is proved in the companion paper \cite{Cwikel-Pustylnik-JFA} and constitutes the key part of the proof of Theorem 5.7 in that paper). Note that our $T$ is different from that in \cite{Cwikel-Pustylnik}. This  difference is the reason why our proof is so much simpler.

\begin{thm}\label{t theorem} For every $z=\mu(z)\in \Lambda_{\psi}^{(2)}(0,1),$ there exists $x=\mu(x)\in L_2(0,1)$ such that
$$\mu(z)\leq Tx\mbox{ and }\|x\|_2\leq c_{{\rm abs}}\|z\|_{\Lambda_{\psi}^{(2)}}.$$
\end{thm}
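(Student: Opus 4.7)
The plan is a purely dyadic construction. Set $I_n := (2^{-n-1},2^{-n}]$ and $a_n := z(2^{-n})$ for $n\ge 0$. Since $z=\mu(z)$, the sequence $(a_n)$ is non-decreasing, and the change of variables $u=\log(e/t)$ on each $I_n$ yields the norm equivalence
\[
\|z\|_{\Lambda_{\psi}^{(2)}}^{2} = \int_0^1 \frac{z^2(t)}{t\,\log^2(e/t)}\,dt \approx \sum_{n\ge 1} \frac{a_n^2}{n^2}
\]
up to absolute constants. Since $z(t)\le a_{n+1}$ on $I_n$, it suffices to construct a non-negative decreasing step function $x$ on this partition satisfying $\|x\|_2^2 \le c\sum_n a_n^2/n^2$ and $(Tx)(t)\ge c'\, a_{n+1}$ on each $I_n$, for absolute constants $c,c'>0$.

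Set $u_k := a_{2k}/k$ for $k\ge 1$ and define $x|_{I_n}\equiv x_n$ by
\[
x_n := \max_{1\le k\le n}\, u_k\, 2^{k/2}\qquad (n\ge 1),
\]
with $x_0$ of order $a_1$ chosen so that $x_0\le x_1$. The running maximum makes $(x_n)$ non-decreasing in $n$, hence $x$ is non-increasing in $s$, so $x=\mu(x)$. The trivial inequality $(\max_k v_k)^2\le\sum_k v_k^2$ together with Fubini then gives
\[
\|x\|_2^2 \approx \sum_n x_n^2\,2^{-n} \le \sum_n\sum_{k\le n} u_k^2\,2^{k-n} = 2\sum_k u_k^2 = 2\sum_k\frac{a_{2k}^2}{k^2} \le c\,\|z\|_{\Lambda_\psi^{(2)}}^2.
\]

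For the pointwise lower bound, put $y_k := x_k\,2^{-k/2}\ge u_k$. Splitting $Tx$ over the dyadic partition and using $x_k\ge x_n$ for $k\ge n$ gives, for $t\in I_n$,
\[
(Tx)(t) \ge c_1\left( y_n\,(2^n t)^{1/2} + \sum_{k<n} y_k \right) \ge c_1\sum_{k\le n} y_k,
\]
since $(2^n t)^{1/2} \ge 1/\sqrt 2$ on $I_n$. The doubling inequality $2k\ge n+1$ for $k\ge\lceil(n+1)/2\rceil$ then delivers
\[
\sum_{k\le n} y_k \ge \sum_{k=\lceil(n+1)/2\rceil}^{n}\frac{a_{2k}}{k} \ge a_{n+1}\sum_{k=\lceil(n+1)/2\rceil}^{n}\frac{1}{k} \ge c_2\, a_{n+1} \ge c_2\, z(t),
\]
and rescaling $x$ by an absolute constant produces $z\le Tx$.

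The main obstacle is the monotonicity requirement $x=\mu(x)$: the naive candidate $x_n = u_n\,2^{n/2}$ need not be non-decreasing in $n$ (consecutive ratios $a_{2n+2}/a_{2n}$ can be close to one), which forces the running-maximum fix. Its cost is absorbed by the elementary ``Hardy-on-sequences'' estimate above, and this is the only non-trivial inequality in the whole argument.
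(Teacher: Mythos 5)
Your proof is correct, and it takes a genuinely different route from the paper. The paper works in the continuum: it sets $y(t)=\sup_{0<s<t}\psi(s)\mu(s,z)$, proves $\|y\|_{L_2((0,1),\frac{dt}{t})}\le\sqrt 3\,\|z\|_{\Lambda_\psi^{(2)}}$ by reducing to indicator functions via Lemma II.5.2 of Krein--Petunin--Semenov on normal subadditive positively homogeneous functionals (this is Lemma \ref{cwikel-pustylnik lorentz lemma}), then takes $x(t)=t^{-1/2}y(t)$ and shows $\psi(t)(Tx)(t)\ge\frac{1}{4e}y(\tfrac t2)$ (Lemma \ref{l2 to l2 lemma}). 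Your dyadic $x$ is not merely a discretization of theirs: at scale $2^{-n}$ the paper's function equals $2^{n/2}\sup_{m\ge n}\psi(2^{-m})a_m$ (a tail sup with the weight $t^{-1/2}$ placed outside), while yours is $\max_{1\le k\le n}\frac{a_{2k}}{k}2^{k/2}$, with each term carrying its own weight $2^{k/2}$; for $a_n=\min(n^2,N^2)$ and $n\ll N$ the two constructions differ by a factor of order $N/n$, so they are genuinely distinct admissible choices. What your approach buys is elementary self-containment: the KPS reduction to characteristic functions is replaced by the bare max-$\le$-sum plus Fubini bound $\sum_n x_n^2 2^{-n}\le 2\sum_k u_k^2$, at the modest cost of first translating the $\Lambda_\psi^{(2)}$-norm into the discrete quantity $\sum_n a_n^2/n^2$. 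The paper's continuum construction, by contrast, yields the cleaner pointwise conclusion $Tx\ge\frac{1}{8e}\mu(z)$ with explicit constants and without any dyadic bookkeeping (including the $n=0$ and $x_0$ edge cases you handle separately).
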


The technical part of the proof of Theorem \ref{t theorem} is concentrated in the next lemma.
\begin{lem}\label{cwikel-pustylnik lorentz lemma} Let $z\in\Lambda_{\psi}^{(2)}(0,1)$ and let
$$y(t)=\sup_{0<s<t}\psi(s)\mu(s,z),\quad 0<t<1.$$
We have
$$\|y\|_{L_2((0,1),\frac{dt}{t})}\leq 3^{\frac12}\|z\|_{\Lambda_{\psi}^{(2)}}.$$
\end{lem}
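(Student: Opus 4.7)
The plan is to reduce the inequality to a double integral via a Stieltjes-measure representation of $\mu(\cdot,z)^2$, then evaluate by a two-step Fubini. The engine is the identity $d\psi(s) = \psi(s)^2\,ds/s$ on $(0,1)$, which follows from $\psi(s)=(1-\log s)^{-1}$ by direct differentiation; consequently $\|z\|_{\Lambda_\psi^{(2)}}^2 = \int_0^1 \mu(s,z)^2\psi(s)^2\,\frac{ds}{s}$. Writing $\mu(s,z)^2=\int_{(s,1]}d\nu(r)$, where $\nu$ is the positive Borel measure $-d(\mu(\cdot,z)^2)$ on $(0,1]$, and applying Fubini gives the dual expression $\|z\|_{\Lambda_\psi^{(2)}}^2=\int_{(0,1]}\psi(r)\,d\nu(r)$.

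Substituting the same representation of $\mu^2$ into $y(t)^2=\sup_{0<s<t}\psi(s)^2\mu(s,z)^2$ and commuting the supremum with the $\nu$-integral—valid since the integrand is non-negative, via $\sup\int\le\int\sup$—together with monotonicity and continuity of $\psi$ on $(0,1)$, yields pointwise in $t$ the bound
\[
y(t)^2 \le \int_{(0,1]}\psi(\min(t,r))^2\,d\nu(r).
\]
Integrating against $dt/t$ on $(0,1)$ and applying Fubini again, the whole inequality reduces to estimating the scalar integral $I(r):=\int_0^1\psi(\min(t,r))^2\,dt/t = \int_0^r \psi(t)^2\,dt/t+\psi(r)^2\log(1/r)$. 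The first summand equals $\psi(r)$ by the identity above together with $\psi(0^+)=0$, while the second equals $\psi(r)-\psi(r)^2$ using $\log(1/r)=\psi(r)^{-1}-1$. Hence $I(r)=2\psi(r)-\psi(r)^2\le 3\psi(r)$, and combining everything gives
\[
\|y\|_{L_2((0,1),dt/t)}^2\le 3\int_{(0,1]}\psi(r)\,d\nu(r)=3\,\|z\|_{\Lambda_\psi^{(2)}}^2.
\]

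The whole argument is essentially a two-step Fubini around one sup/integral swap, and no deep technique is required. The only mildly delicate points are the sup/integral exchange (immediate from non-negativity of the integrand) and the endpoint behaviour of $\psi$ at $0$ (harmless since $\psi(0^+)=0$). The specific form of $\psi$ enters only through the identity $d\psi=\psi^2\,dt/t$, which is precisely what makes the inner integral collapse to a clean multiple of $\psi(r)$.
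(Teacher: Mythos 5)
Your proof is correct, and it takes a genuinely different (and arguably cleaner) route from the paper's. The paper first replaces $\mu(s,w)$ by the Cesaro average $(C\mu(w))(s)$ so that the resulting sublinear functional becomes normal, then invokes Lemma~II.5.2 of Krein--Petunin--Semenov to reduce to indicator functions $w=\chi_A$, and finally handles the inner supremum by a case analysis dictated by the non-monotonicity of $s\mapsto s^{-1}\psi^2(s)$ around $s=e^{-1}$. You achieve the same reduction to ``indicators'' more concretely by writing $\mu(s,z)^2=\int_{(s,1]}d\nu(r)$ as a Stieltjes mixture of $\chi_{(0,r)}$, then proving the pointwise inequality $y(t)^2\le\int_{(0,1]}\psi(\min(t,r))^2\,d\nu(r)$ directly (since $\psi$ is increasing, $\psi(s)^2\le\psi(\min(t,r))^2$ whenever $s<t$ and $s<r$, so no abstract ``sup--integral'' lemma is required), and then a single Tonelli step hands everything over to the scalar integral $I(r)$. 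The identity $d\psi=\psi^2\,ds/s$ is exactly what the paper exploits too, but because you have no Cesaro operator inflating the integrand, your computation closes with $I(r)=2\psi(r)-\psi(r)^2\le 2\psi(r)$, i.e.\ you actually obtain the sharper constant $2^{1/2}$ in place of $3^{1/2}$. Two minor technical remarks worth making explicit: (i) the measure $\nu$ should be understood to carry an atom of mass $\mu(1^-,z)^2$ at $r=1$ so that $\mu(s,z)^2=\nu((s,1])$ holds without a boundary term; (ii) the Fubini/Tonelli applications are legitimate even if $\nu((0,1])=\infty$ because all integrands are non-negative, and finiteness of $\int_{(0,1]}\psi(r)\,d\nu(r)$ is exactly the assumption $z\in\Lambda_\psi^{(2)}$.
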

\begin{proof} We claim that (here $C$ is the classical Cesaro operator)
$$\int_0^1\sup_{0<s<t}\psi^2(s)(C\mu(w))(s)\frac{dt}{t}\leq 3\|w\|_{\Lambda_{\psi}},\quad w\in\Lambda_{\psi}.$$
The functional on the left hand side is normal, subadditive and positively homogeneous. By Lemma II.5.2 in \cite{KPS}, it suffices to prove the inequality for the indicator functions.

Let $w=\chi_A$ and let $m(A)=u.$ Obviously, $\mu(w)=\chi_{(0,u)}$ and
$$(C\mu(w))(s)=\frac{\min\{u,s\}}{s},\quad 0<s<1.$$
Thus,
$$\int_0^1\sup_{0<s<t}\psi^2(s)(C\mu(w))(s)\frac{dt}{t}=\int_0^u\sup_{0<s<t}\psi^2(s)\frac{dt}{t}+$$
$$+\int_u^1\max\Big\{\sup_{0<s<u}\psi^2(s),\sup_{u<s<t}\psi^2(s)\frac{u}{s}\Big\}\frac{dt}{t}=$$
$$=\int_0^u\psi^2(t)\frac{dt}{t}+\int_u^1\max\Big\{\psi^2(u),u\sup_{u<s<t}s^{-1}\psi^2(s)\Big\}\frac{dt}{t}.$$
The function $s\to s^{-1}\psi^2(s),$ $s\in(0,1),$ decreases on the interval $(0,e^{-1})$ and increases on the interval $(e^{-1},1).$ Thus,
$$\sup_{u<s<t}s^{-1}\psi^2(s)=
\begin{cases}
u^{-1}\psi^2(u),& t\in(0,e^{-1})\\
t^{-1}\psi^2(t),& u\in(e^{-1},1)\\
\frac{e}{4},& u<e^{-1}<t
\end{cases}
$$
For $u\in(e^{-1},1),$ we have
$$\int_0^1\sup_{0<s<t}\psi^2(s)(C\mu(w))(s)\frac{dt}{t}=\int_0^u\psi^2(t)\frac{dt}{t}+\int_u^1\max\Big\{\psi^2(u),ut^{-1}\psi^2(t)\Big\}\frac{dt}{t}=$$
$$=\int_0^u\psi^2(t)\frac{dt}{t}+\int_u^1\psi^2(u)\frac{dt}{t}=\psi(u)+\psi^2(u)\log(\frac1u)\leq 2\psi(u).$$
For $u\in(0,e^{-1}),$ we have
$$\int_0^1\sup_{0<s<t}\psi^2(s)(C\mu(w))(s)\frac{dt}{t}=\int_0^u\psi^2(t)\frac{dt}{t}+\int_u^{e^{-1}}\psi^2(u)\frac{dt}{t}+$$
$$+\int_{e^{-1}}^1\max\Big\{\psi^2(u),\frac{e}{4}u\Big\}\frac{dt}{t}=\psi(u)+\psi^2(u)\log(\frac1{eu})+\max\Big\{\psi^2(u),\frac{e}{4}u\Big\}\leq 3\psi(u).$$
This yields the claim.

Let $w=\mu^2(z)\in\Lambda_{\psi}.$ We have
$$\int_0^1\sup_{0<s<t}\psi^2(s)\mu(s,w)\frac{dt}{t}\leq \int_0^1\sup_{0<s<t}\psi^2(s)(C\mu(w))(s)\frac{dt}{t}\leq 3\|w\|_{\Lambda_{\psi}}.$$
The assertion follows immediately.
\end{proof}

The following lemma affords a very substantial simplification and streamlining of the arguments employed in \cite{Cwikel-Pustylnik}.

\begin{lem}\label{l2 to l2 lemma} Let $y\in L_2((0,1),\frac{dt}{t})$ and let $x(t)=t^{-\frac12}y(t),$ $0<t<1.$ If $y$ is positive and increasing, then
$$\psi(t)\cdot (Tx)(t)\geq \frac1{4e}y(\frac{t}{2}),\quad 0<t<1.$$
\end{lem}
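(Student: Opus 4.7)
The plan is to substitute $x(s) = s^{-1/2} y(s)$ into the definition of $T$ and lower-bound each piece by exploiting the monotonicity of $y$. This substitution gives
$$(Tx)(t) = t^{-1/2} \int_0^t s^{-1/2} y(s)\, ds + \int_t^1 \frac{y(s)}{s}\, ds.$$
Since $y$ is positive and increasing, for the first integral I restrict the range to $s \in [t/2, t]$, where $y(s) \geq y(t/2)$, and use $t^{-1/2} \int_{t/2}^t s^{-1/2}\, ds = 2 - \sqrt{2}$ to conclude that this term is at least $(2 - \sqrt{2})\, y(t/2)$. For the second integral, $y(s) \geq y(t) \geq y(t/2)$ on $[t,1]$, so it is at least $y(t/2) \log(1/t)$. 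Adding yields
$$(Tx)(t) \geq y(t/2)\,\bigl((2 - \sqrt{2}) + \log(1/t)\bigr).$$

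Multiplying by $\psi(t) = 1/(1 + \log(1/t))$ and writing $u = \log(1/t) \geq 0$, the bracketed factor becomes $\frac{(2 - \sqrt{2}) + u}{1 + u}$, whose derivative $(\sqrt{2} - 1)/(1+u)^2$ is positive; hence the infimum over $u \geq 0$ is attained at $u = 0$, with value $2 - \sqrt{2}$. Since $2 - \sqrt{2} \approx 0.586$ comfortably exceeds $1/(4e) \approx 0.092$, the claimed inequality follows with substantial room.

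There is no real obstacle here; the argument is essentially a one-line computation. The only judgement is to split at $s = t/2$ in the first integral rather than at $s = t$: extracting $y(t)$ instead of $y(t/2)$ would give an empty inner integral and break the case when $t$ is near $1$ (where $\log(1/t)$ vanishes and $\psi(t)$ is bounded away from $0$). The constant $1/(4e)$ in the statement looks to be a convenient round number chosen for the subsequent application, rather than a sharp bound.
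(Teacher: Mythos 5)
Your proof is correct and rests on the same two ingredients as the paper's: monotonicity of $y$ to lower-bound by $y(t/2)$, and the split of the first integral at $s=t/2$. The paper executes this with a case distinction $t\lessgtr e^{-1}$, using only the tail integral $\int_t^1\frac{y(s)}{s}\,ds$ for small $t$ (where $\log(1/t)$ offsets $\psi$) and only the near-origin integral for large $t$ (where $\psi\geq\frac12$); you instead keep both contributions at once, obtaining $(Tx)(t)\geq y(t/2)\bigl((2-\sqrt2)+\log(1/t)\bigr)$ and then noting that $\psi(t)\bigl((2-\sqrt2)+\log(1/t)\bigr)$ is bounded below by $2-\sqrt2$ uniformly in $t$. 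The net effect is a cleaner, case-free argument with the better constant $2-\sqrt2$ in place of $\frac1{4e}$, though the underlying idea is the same.
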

\begin{proof} Suppose $t\in(0,e^{-1}).$ We have
$$\psi(t)\cdot (Tx)(t)\geq\psi(t)\int_t^1\frac{y(s)ds}{s}\geq y(t)\cdot \psi(t)\int_t^1\frac{ds}{s}\geq\frac12 y(t)\geq \frac1{4e}y(\frac{t}{2}).$$
Suppose $t\in(e^{-1},1).$ We have
$$\psi(t)\cdot (Tx)(t)\geq \frac12(Tx)(t)\geq \frac1{2t^{\frac12}}\int_0^ty(s)ds\geq\frac12\int_{\frac{t}{2}}^ty(s)ds\geq \frac1{4e}y(\frac{t}{2}).$$
\end{proof}

\begin{proof}[Proof of Theorem \ref{t theorem}] Let $y$ be as in Lemma \ref{cwikel-pustylnik lorentz lemma} and let $x$ be as in Lemma \ref{l2 to l2 lemma}. It follows from Lemma \ref{cwikel-pustylnik lorentz lemma} that $\|x\|_2\leq 3^{\frac12}\|z\|_{\Lambda_{\psi}^{(2)}}.$ Obviously, $y$ is positive and increasing. It follows from Lemma \ref{l2 to l2 lemma} that
$$\psi(t)\cdot (Tx)(t)\geq \frac1{4e}y(\frac{t}{2})\geq \frac1{4e}\psi(\frac{t}{2})\mu(\frac{t}{2},z)\geq \frac1{8e}\psi(t)\mu(t,z),\quad 0<t<1.$$
Thus,
$$T\mu(x)\geq Tx\geq \frac1{8e}\mu(z).$$	
\end{proof}

The next corollary shows that the result of Proposition \ref{sobolev critical all dimension} is optimal.

\begin{cor}\label{optimal embedding theorem} Let $E(\mathbb{R}^d)$ be a symmetric Banach function space on $\mathbb{R}^d.$ If
$$(1-\Delta)^{-\frac{d}{4}}:L_2(\mathbb{R}^d)\to E(\mathbb{R}^d),$$
then $\Lambda_{\psi}^{(2)}(\mathbb{R}^d)\subset (E+L_{\infty})(\mathbb{R}^d).$
\end{cor}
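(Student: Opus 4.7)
The plan is to reduce the embedding to a pointwise statement on $\mathbb{R}^d$: for every $z\in\Lambda_\psi^{(2)}(\mathbb{R}^d)$, construct an $L_2(\mathbb{R}^d)$-preimage under $(1-\Delta)^{-d/4}$ whose image pointwise dominates a function equimeasurable with $z$ (modulo an $L_\infty$ correction). The assumed boundedness then places $z$ in $E+L_\infty$. I begin by peeling off the $L_\infty$ piece: with $\lambda=\mu(1,z)$ and $z_\infty=\mathrm{sign}(z)\min\{|z|,\lambda\}\in L_\infty(\mathbb{R}^d)$, the residual $z_1:=z-z_\infty$ satisfies $\mu(z_1)=(\mu(z)-\lambda)_+$, which is supported in $[0,1]$ and lies in $\Lambda_\psi^{(2)}(0,1)$. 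So it suffices to prove $z_1\in E(\mathbb{R}^d)$.

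Applying Theorem \ref{t theorem} to $\mu(z_1)$ yields $x=\mu(x)\in L_2(0,1)$ with $\mu(z_1)\leq Tx$. The key design choice is the radial lift $\hat{x}(\xi):=x(\omega_d|\xi|^d)$ rather than the more obvious $x\circ r_d$: the map $\xi\mapsto\omega_d|\xi|^d$ is measure-preserving from $\mathbb{R}^d$ onto $\mathbb{R}_+$ (by Lemma \ref{spherical rearrangement lemma} applied after the dilation $\sigma_{\omega_d^{-1}}$), so $\hat{x}$ is equimeasurable with $x$. In particular $\hat{x}\in L_2(\mathbb{R}^d)$, and by hypothesis $W:=(1-\Delta)^{-d/4}\hat{x}\in E(\mathbb{R}^d)$.

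The main obstacle is to establish a pointwise lower bound $W(\xi)\geq c_d(Tx)(\omega_d|\xi|^d)$ on the ball $\{|\xi|<\omega_d^{-1/d}\}$, which is a rescaled version of Proposition \ref{estimate from below lemma} adapted to the larger support of $\hat{x}$ (of radius $\omega_d^{-1/d}$, exceeding $1$ for large $d$). The estimate $g(t)\geq c_d|t|^{-d/2}$ used in that proof must first be extended from $\{|t|<2\}$ to $\{|t|<2\omega_d^{-1/d}\}$; this still follows from the positivity of $g$ and the Bessel asymptotics cited there. The remainder of the argument mirrors Proposition \ref{estimate from below lemma}: after $|t-s|^{-d/2}\geq 2^{-d/2}\min(|t|^{-d/2},|s|^{-d/2})$ and spherical coordinates, the substitution $u=\omega_d r^d$ combined with the identity $\min(|\xi|^{-d/2},\omega_d^{1/2}u^{-1/2})=\omega_d^{1/2}\min(t^{-1/2},u^{-1/2})$ for $t=\omega_d|\xi|^d$ reassembles the integrand into $(Tx)(t)$.

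To finish, set $V(\xi):=c_d(Tx)(\omega_d|\xi|^d)\chi_{\{|\xi|<\omega_d^{-1/d}\}}$; the pointwise bound and the ideal property of $E$ give $V\in E$. The measure-preserving property of $\xi\mapsto\omega_d|\xi|^d$ ensures that $V$ is equimeasurable with $c_d Tx$, and the radial function $\tilde{z}_1(\xi):=\mu(z_1)(\omega_d|\xi|^d)$ is equimeasurable with $z_1$ by the same token. The inequality $\mu(z_1)\leq Tx$ now translates to $|\tilde{z}_1|\leq c_d^{-1}V$ pointwise, so the ideal property gives $\tilde{z}_1\in E$, and rearrangement invariance gives $z_1\in E$. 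Combined with $z_\infty\in L_\infty$, this yields $z\in E+L_\infty$.
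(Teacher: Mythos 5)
The paper gives no explicit proof of this corollary, so there is nothing to compare against directly; the intended argument is clearly to combine Theorem \ref{t theorem} with Proposition \ref{estimate from below lemma}, which is exactly what you do. Your proof is correct and assembles the paper's tools appropriately. Two of your design choices are worth highlighting, since they address pitfalls a careless reader might miss. First, the decomposition at $\lambda=\mu(1,z)$ is needed because Theorem \ref{t theorem} operates on $\Lambda_\psi^{(2)}(0,1)$, whereas an element of $\Lambda_\psi^{(2)}(\mathbb{R}^d)$ may not have $\mu(z)$ supported in $(0,1)$; peeling off the bounded tail $z_\infty$ is exactly what the $+L_\infty$ in the conclusion is for. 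Second, and more substantively, your choice of the lift $\hat{x}(\xi)=x(\omega_d|\xi|^d)$ in place of the literal $x\circ r_d$ from Proposition \ref{estimate from below lemma} is not merely cosmetic. With $x\circ r_d$ one would obtain a lower bound whose rearrangement is $\sigma_{\omega_d}(Tx\chi_{(0,1)})$, i.e.\ a \emph{dilate} of $Tx$; for $\omega_d<1$ (which occurs for all $d\geq 13$) that dilate is pointwise below $Tx$ and supported on $(0,\omega_d)\subsetneq(0,1)$, so the desired majorization $\mu(z_1)\leq Tx$ would not transfer without extra argument. Your measure-preserving lift avoids this entirely, at the price of the support of $\hat{x}$ being a ball of radius $\omega_d^{-1/d}$ which exceeds $1$ for $d\geq 13$; you correctly note that this forces an extension of the bound $g(t)\geq c_d|t|^{-d/2}$ from $|t|<2$ to $|t|<2\omega_d^{-1/d}$, which indeed follows from the strict positivity and continuity of $g$ away from the origin together with the Bessel asymptotics at $0$. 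With that extension, the spherical substitution $u=\omega_d r^d$ reassembles the kernel into $(Tx)(\omega_d|\xi|^d)$ exactly as in Proposition \ref{estimate from below lemma}, and the ideal and rearrangement-invariance properties of $E$ finish the argument as you describe.
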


\section{Degenerate case of Cwikel estimate}

In this case, we consider operators
$$(1-\Delta)^{-\frac{d}{4}}M_f(1-\Delta)^{-\frac{d}{4}}\mbox{ and }(1-\Delta_{\mathbb{T}^d})^{-\frac{d}{4}}M_f(1-\Delta_{\mathbb{T}^d})^{-\frac{d}{4}}$$
which act, respectively, on $L_2(\mathbb{R}^d)$ and $L_2(\mathbb{T}^d)$ and evaluate their uniform norms. We show that the maximal (symmetric Banach function) space $E$ such that the operators above are bounded for every $f\in E$ is the Marcinkiewicz space $\mathtt{M}_{\psi}.$ For Euclidean space, this follows from Propositions \ref{space boundedness} and \ref{space boundedness optimal} below. For torus, this follows from Propositions \ref{torus boundedness} and \ref{torus boundedness optimal} below.

\subsection{Estimates for Euclidean space}

\begin{prop}\label{space boundedness} Let $d\in\mathbb{N}.$ We have
$$\Big\|(1-\Delta)^{-\frac{d}{4}}M_f(1-\Delta)^{-\frac{d}{4}}\Big\|_{\infty}\leq c_d\|f\|_{\mathtt{M}_{\psi}},\quad f\in\mathtt{M}_{\psi}(\mathbb{R}^d).$$
\end{prop}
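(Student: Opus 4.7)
The plan is to reduce the operator-norm bound to a single-variable $L_1$ estimate and then attack it using the Sobolev embedding of Proposition \ref{sobolev critical all dimension} combined with K\"othe duality. Set $T = (1-\Delta)^{-d/4}$, a bounded self-adjoint operator on $L_2(\mathbb{R}^d)$ of norm at most $1$. Writing
$$\|TM_fT\|_\infty = \sup_{\|g\|_2 = \|h\|_2 = 1}\Big|\int_{\mathbb{R}^d} f(x)\,(Tg)(x)\,\overline{(Th)(x)}\,dx\Big|$$
and applying Cauchy--Schwarz in the form
$$\Big|\int f\,(Tg)\,\overline{(Th)}\Big| \leq \Big(\int |f|\,|Tg|^2\Big)^{1/2}\Big(\int |f|\,|Th|^2\Big)^{1/2},$$
the problem reduces to proving the single estimate
\begin{equation*}
\int_{\mathbb{R}^d} |f|\,|Tg|^2\,dx \leq c_d\,\|f\|_{\mathtt{M}_\psi}\,\|g\|_2^2, \qquad g \in L_2(\mathbb{R}^d).
\end{equation*}

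To establish this estimate, I first invoke the Hardy--Littlewood inequality
$$\int_{\mathbb{R}^d} |f|\,|Tg|^2\,dx \leq \int_0^\infty \mu(s,f)\,\mu(s,|Tg|^2)\,ds$$
and then the K\"othe duality between $\mathtt{M}_\psi$ and $\Lambda_\psi$. Since the fundamental function of $\mathtt{M}_\psi$ is $\phi(t)=t/\psi(t)$, the Lorentz space $\Lambda_\psi$ embeds into the K\"othe dual of $\mathtt{M}_\psi$, yielding the H\"older-type inequality
$$\int_0^\infty \mu_a(s)\,\mu_b(s)\,ds \leq \|a\|_{\mathtt{M}_\psi}\,\|b\|_{\Lambda_\psi},$$
which is obtained by writing $\int \mu_a\mu_b$ via Fubini as $\int F_a(\tau_b(u))\,du$, bounding $F_a(\tau) = \int_0^\tau \mu_a \leq \|a\|_{\mathtt{M}_\psi}\psi(\tau)$, and then recognizing the resulting integral as $\|b\|_{\Lambda_\psi}$ via a change of variable. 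Applied with $a=f$ and $b=|Tg|^2$ this produces
$$\int_{\mathbb{R}^d} |f|\,|Tg|^2 \leq \|f\|_{\mathtt{M}_\psi}\,\||Tg|^2\|_{\Lambda_\psi}.$$

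The final ingredient is Proposition \ref{sobolev critical all dimension} combined with the definition of the $2$-convexification: $\||Tg|^2\|_{\Lambda_\psi} = \|Tg\|_{\Lambda_\psi^{(2)}}^2 \leq c_d^2\,\|g\|_2^2$. Chaining the three estimates yields $\int |f||Tg|^2 \leq c_d^2 \|f\|_{\mathtt{M}_\psi}\|g\|_2^2$, and hence $\|TM_fT\|_\infty \leq c_d \|f\|_{\mathtt{M}_\psi}$ after renaming the constant.

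The only technical obstacle is ensuring the K\"othe-duality inequality goes through with the extended $\psi$ (which equals $t$ on $[1,\infty)$); the argument requires only $\psi(0+) = 0$ and the decay $\mu(u, |Tg|^2)\,\psi(u) \to 0$ as $u \to \infty$. Since $|Tg|^2 \in L_1(\mathbb{R}^d)$, the decreasing rearrangement satisfies $u\,\mu(u, |Tg|^2) \to 0$, so $\psi(u) = u$ on the tail causes no difficulty.
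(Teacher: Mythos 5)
Your proof is correct and takes essentially the same route as the paper: both reduce the operator norm to bounding $\int_{\mathbb{R}^d}|f|\,|(1-\Delta)^{-d/4}x|^2\,dx$, apply the H\"older-type pairing between $\mathtt{M}_\psi$ and $\Lambda_\psi$ to get $\|f\|_{\mathtt{M}_\psi}\|(1-\Delta)^{-d/4}x\|_{\Lambda_\psi^{(2)}}^2$, and close with Proposition \ref{sobolev critical all dimension}. The only cosmetic difference is that the paper takes $f\geq 0$ without loss of generality and works with the quadratic form $\langle (1-\Delta)^{-d/4}M_f(1-\Delta)^{-d/4}x,x\rangle$, whereas you keep a bilinear form in $g,h$ and invoke Cauchy--Schwarz, and you spell out the K\"othe-duality inequality (including the boundary check at $u\to\infty$) that the paper leaves implicit.
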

\begin{proof} Without loss of generality, $f$ is real-valued and positive. We have
$$\Big\|(1-\Delta)^{-\frac{d}{4}}M_f(1-\Delta)^{-\frac{d}{4}}\Big\|_{\infty}=\sup_{\|x\|_2\leq1}\Big|\langle (1-\Delta)^{-\frac{d}{4}}M_f(1-\Delta)^{-\frac{d}{4}}x,x\rangle\Big|=$$
$$=\sup_{\|x\|_2\leq1}\Big|\langle f\cdot (1-\Delta)^{-\frac{d}{4}}x,(1-\Delta)^{-\frac{d}{4}}x\rangle\Big|=$$
$$=\sup_{\|x\|_2\leq1}\Big\|f\cdot \Big|(1-\Delta)^{-\frac{d}{4}}x\Big|^2\Big\|_1\leq \sup_{\|x\|_2\leq1}\|f\|_{\mathtt{M}_{\psi}}\Big\|(1-\Delta)^{-\frac{d}{4}}x\Big\|_{\Lambda_{\psi}^{(2)}}^2.$$
The assertion follows now from Proposition \ref{sobolev critical all dimension}.
\end{proof}

The converse inequality follows from Proposition \ref{estimate from below lemma} and Theorem \ref{t theorem}.

\begin{prop}\label{space boundedness optimal} Let $d\in\mathbb{N}.$ Let $f=\mu(f)\in\mathtt{M}_{\psi}(0,\infty).$ We have
$$\Big\|(1-\Delta)^{-\frac{d}{4}}M_{f\circ r_d}(1-\Delta)^{-\frac{d}{4}}\Big\|_{\infty}\geq c_d\|f\|_{\mathtt{M}_{\psi}},.$$
\end{prop}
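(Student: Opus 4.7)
My plan is to prove the lower bound by exhibiting, for each $t_0 \in (0,1]$, an explicit unit vector $x_{t_0} \in L_2(\mathbb{R}^d)$ whose quadratic form against the operator is at least of order $\psi(t_0)^{-1}\int_0^{t_0} f(u)\,du$, and then taking the supremum over $t_0$ to recover the Marcinkiewicz norm. Since $f\geq 0$, the operator is positive and self-adjoint, so its uniform norm equals
$$\sup_{\|x\|_2\leq 1}\int_{\mathbb{R}^d} f(|t|^d)\,\bigl|((1-\Delta)^{-d/4}x)(t)\bigr|^2\,dt,$$
which is the quantity I need to bound from below.

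For the construction, I would apply Theorem \ref{t theorem} to the indicator $z=\chi_{(0,t_0)}\in\Lambda_\psi^{(2)}(0,1)$, noting that $\|z\|_{\Lambda_\psi^{(2)}}^2=\psi(t_0)$. This produces $y=\mu(y)\in L_2(0,1)$ with $\chi_{(0,t_0)}\leq Ty$ and $\|y\|_2\leq c_{\mathrm{abs}}\psi(t_0)^{1/2}$. I then set $x_{t_0}=(y\circ r_d)/\|y\circ r_d\|_2$, where the normalization is handled via $\|y\circ r_d\|_2^2=\omega_d\|y\|_2^2$ (a quick polar-coordinates computation, or a direct application of Lemma \ref{spherical rearrangement lemma}). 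Feeding $y$ (extended by zero) into Proposition \ref{estimate from below lemma} gives the pointwise estimate
$$(1-\Delta)^{-d/4}(y\circ r_d)(t)\geq c_d'(Ty)(|t|^d)\geq c_d'\chi_{\{|t|^d<t_0\}}(t).$$
Squaring, pairing with $f\circ r_d$, and switching to polar coordinates yields
$$\int_{\mathbb{R}^d} f(|t|^d)\bigl|((1-\Delta)^{-d/4}x_{t_0})(t)\bigr|^2\,dt\geq\frac{(c_d')^2}{c_{\mathrm{abs}}^2\,\psi(t_0)}\int_0^{t_0}f(u)\,du,$$
so the operator norm dominates the right-hand side for every $t_0\in(0,1]$.

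The only step requiring care is identifying the supremum over $t_0\in(0,1]$ with the full Marcinkiewicz norm on $(0,\infty)$. Since $f=\mu(f)$ is nonincreasing, the average $t\mapsto t^{-1}\int_0^t f$ is nonincreasing on $(0,\infty)$; hence on $[1,\infty)$, where $\psi(t)=t$, the ratio $\psi(t)^{-1}\int_0^t f$ is maximised at $t=1$ with value $\int_0^1 f$, which is recovered by the $(0,1)$-branch of $\psi$ at $t_0\to 1^-$. Consequently
$$\|f\|_{\mathtt{M}_\psi}=\sup_{t_0\in(0,1]}\frac{1}{\psi(t_0)}\int_0^{t_0}f(u)\,du,$$
and the proposition follows with $c_d=(c_d'/c_{\mathrm{abs}})^2$. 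I do not anticipate any genuine obstacle: the argument is a straightforward gluing of the positivity chain from Proposition \ref{estimate from below lemma} with the optimal preimage produced by Theorem \ref{t theorem}, combined with the polar-coordinates identity from Lemma \ref{spherical rearrangement lemma}.
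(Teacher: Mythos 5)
Your proof is correct and follows essentially the same route as the paper: restrict to radial test vectors, apply Proposition \ref{estimate from below lemma} to replace $(1-\Delta)^{-\frac{d}{4}}$ by the Hardy-type operator $T$, then invoke Theorem \ref{t theorem} to pass from $T$-images back to elements of $\Lambda_{\psi}^{(2)}(0,1)$, finishing with the spherical change of variables and the computation of $\|f\|_{\mathtt{M}_{\psi}}$. The only difference is presentational: the paper keeps the supremum over all $z=\mu(z)$ in the unit ball of $\Lambda_{\psi}(0,1)$ and concludes with Marcinkiewicz--Lorentz duality ($\sup\|fz\|_1=\|f\chi_{(0,1)}\|_{\mathtt{M}_{\psi}}$), whereas you test directly against the indicators $z=\chi_{(0,t_0)}$ using $\|\chi_{(0,t_0)}\|_{\Lambda_{\psi}^{(2)}}^2=\psi(t_0)$, which recovers the Marcinkiewicz norm just as well.
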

\begin{proof} We have
$$\Big\|(1-\Delta)^{-\frac{d}{4}}M_{f\circ r_d}(1-\Delta)^{-\frac{d}{4}}\Big\|_{\infty}=$$
$$=\sup_{\|\xi\|_2\leq1}\Big|\langle (1-\Delta)^{-\frac{d}{4}}M_{f\circ r_d}(1-\Delta)^{-\frac{d}{4}}\xi,\xi\rangle\Big|=$$
$$=\sup_{\|\xi\|_2\leq1}\Big|\langle (f\circ r_d)\cdot (1-\Delta)^{-\frac{d}{4}}\xi,(1-\Delta)^{-\frac{d}{4}}\xi\rangle\Big|=$$
$$=\sup_{\|\xi\|_2\leq1}\Big\|(f\circ r_d)\cdot \Big|(1-\Delta)^{-\frac{d}{4}}\xi\Big|^2\Big\|_1.$$

Let us now restrict the supremum to the radial $\xi.$ That is, let $\xi=x\circ r_d,$ where $x\in L_2(0,\infty)$ and $\|x\|_2\leq\omega_d^{-\frac12}.$ We have
$$\Big\|(1-\Delta)^{-\frac{d}{4}}M_{f\circ r_d}(1-\Delta)^{-\frac{d}{4}}\Big\|_{\infty}\geq$$
$$\geq\sup_{\|x\|_2\leq\omega_d^{-\frac12}}\Big\|(f\circ r_d)\cdot \Big|(1-\Delta)^{-\frac{d}{4}}(x\circ r_d)\Big|^2\Big\|_1.$$
Let us further assume that $x=\mu(x)$ is supported on the interval $(0,1).$ By Proposition \ref{estimate from below lemma} we have
$$\Big\|(1-\Delta)^{-\frac{d}{4}}M_{f\circ r_d}(1-\Delta)^{-\frac{d}{4}}\Big\|_{\infty}\geq$$
$$\geq\sup_{\substack{x=\mu(x)\\ \|x\|_2\leq\omega_d^{-\frac12}\\ x=0\mbox{ on }(1,\infty)}}\Big\|(f\circ r_d)\cdot \Big|(1-\Delta)^{-\frac{d}{4}}(x\circ r_d)\Big|^2\Big\|_1\geq$$
$$\geq c_d\cdot\sup_{\substack{x=\mu(x)\\ \|x\|_2\leq\omega_d^{-\frac12}\\ x=0\mbox{ on }(1,\infty)}}\Big\|(f\circ r_d)\cdot \Big|Tx\circ r_d\Big|^2\Big\|_1=c_d\omega_d\cdot\sup_{\substack{x=\mu(x)\\ \|x\|_2\leq\omega_d^{-\frac12}\\ x=0\mbox{ on }(1,\infty)}}\big\|f\cdot |Tx|^2\big\|_1.$$

By Theorem \ref{t theorem}, we have
$$\Big\|(1-\Delta)^{-\frac{d}{4}}M_{f\circ r_d}(1-\Delta)^{-\frac{d}{4}}\Big\|_{\infty}\geq$$
$$\geq c_d\omega_d\cdot\sup_{\substack{z=\mu(z)\in\Lambda_{\psi^{(2)}}(0,1)\\ \|z\|_{\Lambda_{\psi^{(2)}}}\leq c_{{\rm abs}}^{-1}\omega_d^{-\frac12}}}\big\|f\cdot z^2\big\|_1=c_dc_{{\rm abs}}^{-2}\cdot\sup_{\substack{z=\mu(z)\in\Lambda_{\psi}(0,1)\\ \|z\|_{\Lambda_{\psi}}\leq 1}}\big\|f\cdot z\big\|_1.$$

It is clear that
$$\sup_{\substack{z=\mu(z)\in\Lambda_{\psi}(0,1)\\ \|z\|_{\Lambda_{\psi}}\leq 1}}\big\|f\cdot z\big\|_1=\|f\chi_{(0,1)}\|_{\mathtt{M}_{\psi}}.$$
Since $f=\mu(f)$ and since $\psi$ is linear on $(1,\infty),$ it follows that
$$\|f\chi_{(0,1)}\|_{\mathtt{M}_{\psi}}=\|\mu(f)\chi_{(0,1)}\|_{\mathtt{M}_{\psi}}\approx_d\|f\|_{\mathtt{M}_{\psi}+L_{\infty}}\approx_d\|f\|_{\mathtt{M}_{\psi}}.$$
Combining three last equations, we complete the proof.
\end{proof}

\subsection{Estimates for the torus}

The following lemma is taken from \cite{DAO} (see Lemmas 4.5 and 4.6 there).

\begin{lem}\label{non-compact to compact lemma} Let $h$ be a measurable function on $[-1,1]^d.$ We have
$$M_h(1-\Delta)^{-\frac{d}{2}}M_h\Big|_{L_2([-1,1]^d)}=M_ha(\nabla_{\mathbb{T}^d})M_h\Big|_{L_2([-1,1]^d)},$$
where
$$a(n)=(1+|n|^2)^{-\frac{d}{2}}+b(n),\quad b(n)=O((1+|n|^2)^{-\frac{d+1}{2}}),\quad n\in\mathbb{Z}^d.$$
\end{lem}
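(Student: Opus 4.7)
The strategy is to realize both operators as integral operators with kernels on $[-1,1]^d\times[-1,1]^d$ and identify the torus symbol $a$ via Poisson summation.

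First I would write $(1-\Delta)^{-d/2}$ on $\mathbb{R}^d$ as convolution with the Bessel potential $G$, characterized by $\widehat{G}(\xi)=(1+|\xi|^2)^{-d/2}$. The kernel $G$ (which already appeared in the proof of Theorem~\ref{dist sobolev critical}) is smooth away from the origin and decays exponentially at infinity. Since $h$ is supported in $[-1,1]^d$, the operator $M_h(1-\Delta)^{-d/2}M_h$, restricted to $L_2([-1,1]^d)$, is the integral operator with kernel $h(x)\,G(x-y)\,h(y)$ on $[-1,1]^d\times[-1,1]^d$ (the argument $x-y$ ranging over $[-2,2]^d$).

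Second, I would represent the torus operator analogously. Viewing $\mathbb{T}^d$ with fundamental domain $[-1,1]^d$, the multiplier $a(\nabla_{\mathbb{T}^d})$ is convolution on $\mathbb{T}^d$ with the periodic kernel $K_a(z)=\sum_{n\in\mathbb{Z}^d}a(n)\,e_n(z)$, where $\{e_n\}$ is the torus Fourier basis. Hence $M_h\,a(\nabla_{\mathbb{T}^d})\,M_h$ is the integral operator with kernel $h(x)\,K_a(x-y\bmod\mathbb{T}^d)\,h(y)$. The claim of the lemma therefore reduces to producing a symbol $a$ so that the two integral kernels agree on the support of $h\otimes h$.

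Third, I would construct $a$ by periodization: define $K_a$ to be the $\mathbb{T}^d$-periodization of $G$, namely $K_a(z)=\sum_{k\in L}G(z+k)$ for $z$ in the fundamental domain, where $L$ is the period lattice (the sum converges absolutely by the exponential decay of $G$), modified if necessary by a smooth cutoff near the boundary of the fundamental domain to ensure pointwise matching on the support of $h\otimes h$. By Poisson summation, the Fourier coefficients of $K_a$ on $\mathbb{T}^d$ equal $\widehat{G}$ evaluated on the dual lattice, up to a normalization constant; after matching normalizations this contributes the main term $(1+|n|^2)^{-d/2}$, with the residual forming the symbol $b(n)$.

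The main obstacle is verifying $b(n)=O((1+|n|^2)^{-(d+1)/2})$, a gain of one order of decay over the main symbol. The remainder is a sum of two types of contributions: the shifted translates $G(\,\cdot\,+k)$ for $k\neq 0\in L$ (which are smooth on the fundamental domain because their singularities have been translated away by $|k|\geq 2$) and any boundary adjustment used to match the kernels on the support of $h\otimes h$. Both are $C^\infty$ with exponential decay, so their Fourier coefficients decay faster than any polynomial; the stated polynomial bound reflects the fact that the subleading singularity of $G$ at the origin is one order smoother than the leading one, so the error between the Euclidean symbol and its torus approximation improves by exactly one power of $(1+|n|^2)^{1/2}$.
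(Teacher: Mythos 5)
The paper does not prove this lemma; it simply cites Lemmas 4.5 and 4.6 of \cite{DAO}. So I will assess your argument on its own terms.

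Your overall strategy --- realize both sides as integral operators, then build a periodic kernel which agrees with the Bessel kernel $G$ on the set of relevant differences and read off its symbol --- is the right one, and a version of it does prove the lemma. But two points need repair.

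First, the kernel identification is impossible if the fundamental domain of $\mathbb{T}^d$ is $[-1,1]^d$ itself. On the left the kernel is $h(x)G(x-y)h(y)$ with $x-y$ running over $[-2,2]^d$; on the right it is $h(x)K_a(x-y\bmod L)h(y)$. If the period lattice were $2\mathbb{Z}^d$, two values $x-y$ and $x'-y'$ in $[-2,2]^d$ differing by a lattice vector would get mapped to the same representative, yet $G$ takes different values at them, so no periodic $K_a$ can match $G$. The argument requires, and you should state, that $\mathbb{T}^d$ has a fundamental domain strictly containing $[-2,2]^d$ (consistent with the paper, where $(1-\Delta_{\mathbb{T}^d})$ has eigenvalues $1+|n|^2$, $n\in\mathbb{Z}^d$, i.e.\ period $2\pi$), so that $x-y$ is its own representative. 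Related to this, it is cleaner to reverse your order of operations: instead of periodizing $G$ and then patching, take a smooth cutoff $\chi$ with $\chi\equiv 1$ on $[-2,2]^d$ and ${\rm supp}\,\chi$ inside the fundamental domain, set $K_a$ to be the periodization of $G\chi$, and then $a(n)=\widehat{G\chi}(n)$ directly; your phrase ``modified near the boundary'' understates the correction, since the translates $G(\cdot+k)$, $k\neq 0$, are nonzero throughout the fundamental domain, not only near its boundary.

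Second, your final paragraph is internally inconsistent. The first half is correct and already finishes the job: $G\chi - G = G(\chi-1)$ is $C^\infty$ and compactly supported, hence $b(n)=\widehat{G(\chi-1)}(n)$ decays faster than any polynomial --- in particular $b(n)=O((1+|n|^2)^{-(d+1)/2})$. But the second half, attributing the stated rate to ``the subleading singularity of $G$ at the origin,'' contradicts this (the origin singularity produces the \emph{main} term, not $b$) and should be deleted; the lemma's exponent $-(d+1)/2$ is simply the decay rate needed downstream (to invoke the post-critical Sobolev bound in Lemma~\ref{postcritical torus lemma}), not a sharp feature of the construction.
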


The following lemma relies on the post-critical Sobolev inequality:
$$\Big\|(1-\Delta_{\mathbb{T}^d})^{-\frac{d+1}{4}}\Big\|_{L_2(\mathbb{T}^d)\to L_{\infty}(\mathbb{T}^d)}\leq c_d.$$
The validity of this inequality follows immediately from the fact
$$\Big\{(1+|n|^2)^{-\frac{d+1}{2}}\Big\}_{n\in\mathbb{Z}^d}\in l_1(\mathbb{Z}^d).$$

\begin{lem}\label{postcritical torus lemma} Let $h\in L_1(\mathbb{T}^d).$ We have
$$\Big\|(1-\Delta_{\mathbb{T}^d})^{-\frac{d+1}{4}}M_h(1-\Delta_{\mathbb{T}^d})^{-\frac{d+1}{4}}\Big\|_{\infty}\leq c_d\|h\|_1.$$
\end{lem}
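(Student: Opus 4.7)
The statement reduces immediately to a Hölder/duality argument once one recognizes that the exponent $\frac{d+1}{4}$ is strictly post-critical. My plan is to bound the operator norm through its associated sesquilinear form.

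Set $A := (1-\Delta_{\mathbb{T}^d})^{-\frac{d+1}{4}}$, which is a positive self-adjoint bounded operator on $L_2(\mathbb{T}^d)$. Since $\{(1+|n|^2)^{-\frac{d+1}{2}}\}_{n\in\mathbb{Z}^d}\in\ell_1(\mathbb{Z}^d)$, the Fourier-expansion bound $|(Ax)(t)|\le\sum_n|\widehat{x}(n)|(1+|n|^2)^{-\frac{d+1}{4}}$ together with Cauchy-Schwarz gives the post-critical Sobolev inequality $\|A\|_{L_2(\mathbb{T}^d)\to L_{\infty}(\mathbb{T}^d)}\le c_d$, as the excerpt already notes.

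With this in hand, the main step is a one-line Hölder estimate. For $x,y\in L_2(\mathbb{T}^d)$ with $\|x\|_2,\|y\|_2\leq1$, the self-adjointness of $A$ yields
$$\bigl|\langle AM_hAx,y\rangle\bigr|=\bigl|\langle M_hAx,Ay\rangle\bigr|=\Bigl|\int_{\mathbb{T}^d}h(t)\,(Ax)(t)\,\overline{(Ay)(t)}\,dt\Bigr|.$$
Estimating the integrand by $L_1\cdot L_\infty\cdot L_\infty$ Hölder and using the post-critical Sobolev inequality on each of the two $L_\infty$ factors,
$$\bigl|\langle AM_hAx,y\rangle\bigr|\le\|h\|_1\cdot\|Ax\|_{\infty}\cdot\|Ay\|_{\infty}\le c_d^{\,2}\|h\|_1\|x\|_2\|y\|_2.$$
Taking the supremum over the unit ball of $L_2(\mathbb{T}^d)\times L_2(\mathbb{T}^d)$ gives the desired bound $\|AM_hA\|_{\infty}\le c_d\|h\|_1$ (after absorbing $c_d^{\,2}$ into $c_d$).

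There is no real obstacle here; the only minor point to address is that $AM_hA$ a priori has to be interpreted: since $Ax\in L_\infty(\mathbb{T}^d)$ and $h\in L_1(\mathbb{T}^d)$, the product $h\cdot Ax$ belongs to $L_1(\mathbb{T}^d)\subset(L_\infty(\mathbb{T}^d))^{\ast}$, which is precisely what is needed for the sesquilinear form above to be defined on $L_2\times L_2$. The form bound then automatically extends $AM_hA$ to a bounded operator on $L_2(\mathbb{T}^d)$ with the stated norm estimate, completing the proof.
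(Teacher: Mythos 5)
Your proof is correct and is essentially the same argument as in the paper: pass to the (sesqui)linear form of $AM_hA$, apply H\"older to pull out $\|h\|_1$, and bound the two remaining factors by the post-critical Sobolev estimate $\|A\|_{L_2\to L_\infty}\leq c_d$. The only cosmetic difference is that the paper first reduces to $h\geq 0$ and then uses the quadratic form $\sup_{\|x\|_2\le1}|\langle AM_hAx,x\rangle|$, whereas you work directly with the bilinear form and so avoid that reduction; both routes are one line apart.
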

\begin{proof} Without loss of generality, $h$ is real-valued and positive. We have
$$\Big\|(1-\Delta_{\mathbb{T}^d})^{-\frac{d+1}{4}}M_h(1-\Delta_{\mathbb{T}^d})^{-\frac{d+1}{4}}\Big\|_{\infty}=\sup_{\|x\|_2\leq1}\Big|\langle (1-\Delta_{\mathbb{T}^d})^{-\frac{d+1}{4}}M_h(1-\Delta_{\mathbb{T}^d})^{-\frac{d+1}{4}}x,x\rangle\Big|=$$
$$=\sup_{\|x\|_2\leq1}\Big|\langle h\cdot (1-\Delta_{\mathbb{T}^d})^{-\frac{d+1}{4}}x,(1-\Delta_{\mathbb{T}^d})^{-\frac{d+1}{4}}x\rangle\Big|=$$
$$=\sup_{\|x\|_2\leq1}\Big\|h\cdot \Big|(1-\Delta_{\mathbb{T}^d})^{-\frac{d+1}{4}}x\Big|^2\Big\|_1\leq \sup_{\|x\|_2\leq1}\|h\|_1\Big\|(1-\Delta_{\mathbb{T}^d})^{-\frac{d+1}{4}}x\Big\|_{\infty}^2.$$
The assertion follows now from the post-critical Sobolev inequality.
\end{proof}

It is of crucial importance that the estimate in the preceding lemma is given in terms of $\|h\|_1$ rather than $\|h\|_{\mathtt{M}_{\psi}}.$

The following proposition is a version of Proposition \ref{space boundedness} for $\mathbb{T}^d.$ Observe that Marcinkiewicz space $\mathtt{M}_{\psi}$ strictly contains the Orlicz space $L_M,$ $M(t)=t\log(e+t),$ $t>0. 	$ The result below is the best possible criterion for the boundedness of the symmetrized Cwikel operator 
$$(1-\Delta_{\mathbb{T}^d})^{-\frac{d}{4}}M_f(1-\Delta_{\mathbb{T}^d})^{-\frac{d}{4}}.$$
The sharpness of the result will be demonstrated below in Proposition \ref{torus boundedness optimal}.

\begin{prop}\label{torus boundedness} Let $d\in\mathbb{N}.$ Let $f\in\mathtt{M}_{\psi}(\mathbb{T}^d).$ We have
$$\Big\|(1-\Delta_{\mathbb{T}^d})^{-\frac{d}{4}}M_f(1-\Delta_{\mathbb{T}^d})^{-\frac{d}{4}}\Big\|_{\infty}\leq c_d\|f\|_{\mathtt{M}_{\psi}}.$$
\end{prop}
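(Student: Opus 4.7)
The plan is to transfer the Euclidean estimate of Proposition \ref{space boundedness} to the torus by way of Lemma \ref{non-compact to compact lemma}, and to absorb the resulting lower-order Fourier multiplier error using Lemma \ref{postcritical torus lemma}. Without loss of generality one may assume $f \geq 0$ (by splitting $f = f_+ - f_-$ and then real and imaginary parts, each piece being controlled in $\mathtt{M}_\psi$ by $f$ itself). Set $h = f^{1/2}$ and $A = (1-\Delta_{\mathbb{T}^d})^{-d/4}$; since $A$ is positive and self-adjoint,
$$\|A M_f A\|_\infty = \|A M_h\|_\infty^2 = \|M_h A^2 M_h\|_\infty = \|M_h (1-\Delta_{\mathbb{T}^d})^{-d/2} M_h\|_\infty,$$
so the task reduces to bounding the last quantity by $c_d \|f\|_{\mathtt{M}_\psi}$.

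Identify $\mathbb{T}^d$ with the fundamental domain $[-1,1]^d$ and extend $h$ to $\mathbb{R}^d$ by zero outside this cube. Lemma \ref{non-compact to compact lemma} then yields the operator identity
$$M_h (1-\Delta_{\mathbb{R}^d})^{-d/2} M_h \big|_{L_2([-1,1]^d)} = M_h (1-\Delta_{\mathbb{T}^d})^{-d/2} M_h + M_h\, b(\nabla_{\mathbb{T}^d})\, M_h,$$
where $b(n) = O((1+|n|^2)^{-(d+1)/2})$. By the triangle inequality, the target norm is at most the sum of the operator norms of the two pieces on the right.

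For the main piece, running the $C^*$-identity in reverse and invoking Proposition \ref{space boundedness} gives
$$\|M_h (1-\Delta_{\mathbb{R}^d})^{-d/2} M_h\|_\infty = \|(1-\Delta_{\mathbb{R}^d})^{-d/4} M_f (1-\Delta_{\mathbb{R}^d})^{-d/4}\|_\infty \leq c_d \|f\|_{\mathtt{M}_\psi(\mathbb{R}^d)},$$
and since the zero extension of $f$ is supported in a cube of finite volume while $\psi$ is linear beyond $t=1$, a short computation with the definition of the Marcinkiewicz norm shows $\|f\|_{\mathtt{M}_\psi(\mathbb{R}^d)} \leq c_d \|f\|_{\mathtt{M}_\psi(\mathbb{T}^d)}$. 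For the residual piece, factor $b(\nabla_{\mathbb{T}^d}) = (1-\Delta_{\mathbb{T}^d})^{-(d+1)/4}\, P\, (1-\Delta_{\mathbb{T}^d})^{-(d+1)/4}$ where $P$ is the Fourier multiplier with bounded symbol $(1+|n|^2)^{(d+1)/2} b(n)$; then the same $C^*$ trick together with Lemma \ref{postcritical torus lemma} yields
$$\|M_h\, b(\nabla_{\mathbb{T}^d})\, M_h\|_\infty \leq \|P\|_\infty \cdot \|(1-\Delta_{\mathbb{T}^d})^{-(d+1)/4} M_f (1-\Delta_{\mathbb{T}^d})^{-(d+1)/4}\|_\infty \leq c_d \|f\|_{L_1(\mathbb{T}^d)},$$
and on the finite-measure space $\mathbb{T}^d$ the embedding $\mathtt{M}_\psi \subset L_1$ finishes the bound.

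The conceptual crux is Lemma \ref{non-compact to compact lemma}, which is imported from \cite{DAO} and does the heavy lifting of trading the torus resolvent for the Euclidean one. Once that lemma is in hand, the argument is a clean interplay between the critical Euclidean estimate (Proposition \ref{space boundedness}) and the post-critical torus estimate (Lemma \ref{postcritical torus lemma}): the extra half-power of decay in the error symbol $b$ is precisely what permits the weaker $L_1$-style bound to dominate the error without touching the sharp $\mathtt{M}_\psi$ constant in the main term.
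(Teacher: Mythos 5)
Your proof is correct and follows the paper's argument step for step: factor $(1-\Delta_{\mathbb{T}^d})^{-\frac{d}{4}}M_f(1-\Delta_{\mathbb{T}^d})^{-\frac{d}{4}}$ into the $C^*$-form $M_{f^{1/2}}(1-\Delta_{\mathbb{T}^d})^{-\frac{d}{2}}M_{f^{1/2}}$, invoke Lemma \ref{non-compact to compact lemma} to trade the torus resolvent for the restricted Euclidean one plus a lower-order multiplier $b(\nabla_{\mathbb{T}^d})$, bound the Euclidean piece by Proposition \ref{space boundedness}, and absorb the error via Lemma \ref{postcritical torus lemma} and the embedding $\mathtt{M}_{\psi}(\mathbb{T}^d)\subset L_1(\mathbb{T}^d)$. (One phrasing nit: the torus operator is itself one of the ``two pieces on the right,'' so the triangle inequality bounds its norm by the restricted Euclidean norm plus the error norm rather than by the sum of the two right-hand pieces --- but these are precisely the two quantities your subsequent estimates control, so the argument is sound.)
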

\begin{proof} Without loss of generality, $f$ is real-valued, positive and supported on $[-1,1]^d.$

In this proof, we frequently use the following property: let  $H_0\subset H$ be a Hilbert subspace and let $A:H\to H_0.$ If $A$ vanishes on the orthogonal complement of $H_0,$ then $\|A\|_{\infty}=\|A|_{H_0}\|_{\infty}.$

Firstly, note that
$$\Big\|(1-\Delta)^{-\frac{d}{4}}M_f(1-\Delta)^{-\frac{d}{4}}\Big\|_{\infty}=\Big\|M_{f^{\frac12}}(1-\Delta)^{-\frac{d}{2}}M_{f^{\frac12}}\Big\|_{\infty}=$$
$$=\Big\|M_{f^{\frac12}}(1-\Delta)^{-\frac{d}{2}}M_{f^{\frac12}}\Big|_{L_2([-1,1]^d)}\Big\|_{\infty}.$$
By Lemma \ref{non-compact to compact lemma}, we have
$$M_{f^{\frac12}}(1-\Delta)^{-\frac{d}{2}}M_{f^{\frac12}}\Big|_{L_2([-1,1]^d)}=$$
$$=M_{f^{\frac12}}(1-\Delta_{\mathbb{T}^d})^{-\frac{d}{2}}M_{f^{\frac12}}\Big|_{L_2([-1,1]^d)}+M_{f^{\frac12}}b(\nabla_{\mathbb{T}^d}) M_{f^{\frac12}}\Big|_{L_2([-1,1]^d)}.$$
By triangle inequality, we have 
$$\Big\|M_{f^{\frac12}}(1-\Delta_{\mathbb{T}^d})^{-\frac{d}{2}}M_{f^{\frac12}}\Big|_{L_2([-1,1]^d)}\Big\|_{\infty}\leq$$
$$\leq\Big\|M_{f^{\frac12}}(1-\Delta)^{-\frac{d}{2}}M_{f^{\frac12}}\Big|_{L_2([-1,1]^d)}\Big\|_{\infty}+\Big\|M_{f^{\frac12}}b(\mathbb{T}^d)M_{f^{\frac12}}\Big|_{L_2([-1,1]^d)}\Big\|_{\infty}=$$
$$=\Big\|M_{f^{\frac12}}(1-\Delta)^{-\frac{d}{2}}M_{f^{\frac12}}\Big\|_{\infty}+\Big\|M_{f^{\frac12}}b(\mathbb{T}^d)M_{f^{\frac12}}\Big\|_{\infty}\leq $$
$$\leq \Big\|(1-\Delta)^{-\frac{d}{4}}M_f(1-\Delta)^{-\frac{d}{4}}\Big\|_{\infty}+c_d\Big\|(1-\Delta_{\mathbb{T}^d})^{-\frac{d+1}{4}}M_f(1-\Delta_{\mathbb{T}^d})^{-\frac{d+1}{4}}\Big\|_{\infty}.$$
The assertion follows from Proposition \ref{space boundedness} and Lemma \ref{postcritical torus lemma}.
\end{proof}

The following proposition is a version of Proposition \ref{space boundedness optimal} for $\mathbb{T}^d.$

\begin{prop}\label{torus boundedness optimal} Let $d\in\mathbb{N}.$ Let $f=\mu(f)\in\mathtt{M}_{\psi}(0,1).$ We have
$$\Big\|(1-\Delta_{\mathbb{T}^d})^{-\frac{d}{4}}M_{f\circ r_d}(1-\Delta_{\mathbb{T}^d})^{-\frac{d}{4}}\Big\|_{\infty}\geq c_d\|f\|_{\mathtt{M}_{\psi}}.$$
\end{prop}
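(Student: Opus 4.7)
The strategy is to transfer the Euclidean lower bound of Proposition \ref{space boundedness optimal} to the torus via the symbolic identification of Lemma \ref{non-compact to compact lemma}, and to supplement it with a complementary lower bound obtained by testing the torus operator on the constant eigenfunction of $1-\Delta_{\mathbb{T}^d}$.

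Without loss of generality assume $f\geq0$, set $h=(f\circ r_d)^{1/2}$, and observe that $h$ is supported in the unit ball $\mathbb{B}^d\subset[-1,1]^d$. By the usual symmetrization trick already used in Propositions \ref{space boundedness} and \ref{torus boundedness},
$$\|(1-\Delta_{\mathbb{T}^d})^{-d/4}M_{f\circ r_d}(1-\Delta_{\mathbb{T}^d})^{-d/4}\|_{\infty}=\|M_h(1-\Delta_{\mathbb{T}^d})^{-d/2}M_h\|_{\infty},$$
and the analogous identity holds for $\mathbb{R}^d$. Since $h$ vanishes outside $[-1,1]^d$, both operators vanish on $L_2([-1,1]^d)^{\perp}$, so their operator norms are equal to the norms of their restrictions to $L_2([-1,1]^d)$. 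By Lemma \ref{non-compact to compact lemma} these restrictions differ by $M_hb(\nabla_{\mathbb{T}^d})M_h|_{L_2([-1,1]^d)}$, and the triangle inequality yields
$$\|M_h(1-\Delta_{\mathbb{T}^d})^{-d/2}M_h\|_{\infty}\geq\|M_h(1-\Delta)^{-d/2}M_h\|_{\infty}-\|M_hb(\nabla_{\mathbb{T}^d})M_h\|_{\infty}.$$

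Next I would control the error term. Writing $b(n)=\beta(n)(1+|n|^2)^{-(d+1)/2}$ with $\|\beta\|_{\infty}\leq c_d$, factoring through the Fourier multiplier $\beta(\nabla_{\mathbb{T}^d})$, and applying Lemma \ref{postcritical torus lemma} to $h^2=f\circ r_d$, one obtains
$$\|M_hb(\nabla_{\mathbb{T}^d})M_h\|_{\infty}\leq c_d\|M_h(1-\Delta_{\mathbb{T}^d})^{-(d+1)/4}\|_{\infty}^2\leq c_d'\|h^2\|_1=c_d'\omega_d\|f\|_1.$$
Combined with Proposition \ref{space boundedness optimal}, this gives
$$\|(1-\Delta_{\mathbb{T}^d})^{-d/4}M_{f\circ r_d}(1-\Delta_{\mathbb{T}^d})^{-d/4}\|_{\infty}\geq c_d\|f\|_{\mathtt{M}_{\psi}}-c_d'\omega_d\|f\|_1.$$

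To absorb the $\|f\|_1$ term (which may be comparable to $\|f\|_{\mathtt{M}_{\psi}}$, e.g. for $f=\chi_{(0,1)}$), I would obtain a complementary lower bound by testing on the normalized constant function $\xi\equiv m(\mathbb{T}^d)^{-1/2}$, which satisfies $(1-\Delta_{\mathbb{T}^d})^{-d/4}\xi=\xi$. This gives
$$N:=\|(1-\Delta_{\mathbb{T}^d})^{-d/4}M_{f\circ r_d}(1-\Delta_{\mathbb{T}^d})^{-d/4}\|_{\infty}\geq\langle M_{f\circ r_d}\xi,\xi\rangle=\frac{\omega_d}{m(\mathbb{T}^d)}\|f\|_1.$$
The two inequalities $N+c_d'\omega_d\|f\|_1\geq c_d\|f\|_{\mathtt{M}_{\psi}}$ and $\|f\|_1\leq c_d''N$ then combine linearly to yield $N\geq c_d'''\|f\|_{\mathtt{M}_{\psi}}$, which is the desired assertion.

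The main obstacle is the error-term estimate: the symbol $b$ is not sign-definite, so one must extract the pointwise decay $b(n)=O((1+|n|^2)^{-(d+1)/2})$ as a uniformly bounded Fourier multiplier sandwiched between two factors of $(1-\Delta_{\mathbb{T}^d})^{-(d+1)/4}$, and then invoke Lemma \ref{postcritical torus lemma} to bound the resulting operator by $\|f\|_1$ rather than $\|f\|_{\mathtt{M}_{\psi}}$. Once this is handled, the combination with the constant-eigenfunction test is purely formal.
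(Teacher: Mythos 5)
Your proposal is correct and follows essentially the same route as the paper: reduce to the Euclidean operator via Lemma \ref{non-compact to compact lemma}, bound the error term by Lemma \ref{postcritical torus lemma} to get a $\|f\|_1$ penalty, and absorb that penalty with the complementary lower bound obtained by testing on the constant function. The only difference is that you spell out the factoring $b(n)=\beta(n)(1+|n|^2)^{-(d+1)/2}$ with $\beta$ bounded (needed because $b$ is not sign-definite), a detail the paper leaves implicit.
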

\begin{proof} Firstly, note that
$$\Big\|(1-\Delta)^{-\frac{d}{4}}M_{f\circ r_d}(1-\Delta)^{-\frac{d}{4}}\Big\|_{\infty}=\Big\|M_{f^{\frac12}\circ r_d}(1-\Delta)^{-\frac{d}{2}}M_{f^{\frac12}\circ r_d}\Big\|_{\infty}=$$
$$=\Big\|M_{f^{\frac12}\circ r_d}(1-\Delta)^{-\frac{d}{2}}M_{f^{\frac12}\circ r_d}\Big|_{L_2([-1,1]^d)}\Big\|_{\infty}.$$
By Lemma \ref{non-compact to compact lemma}, we have
$$M_{f^{\frac12}\circ r_d}(1-\Delta)^{-\frac{d}{2}}M_{f^{\frac12}\circ r_d}\Big|_{L_2([-1,1]^d)}=$$
$$=M_{f^{\frac12}\circ r_d}(1-\Delta_{\mathbb{T}^d})^{-\frac{d}{2}}M_{f^{\frac12}\circ r_d}\Big|_{L_2([-1,1]^d)}+M_{f^{\frac12}\circ r_d}b(\nabla_{\mathbb{T}^d}) M_{f^{\frac12}\circ r_d}\Big|_{L_2([-1,1]^d)}.$$
By triangle inequality, we have
$$\Big\|M_{f^{\frac12}\circ r_d}(1-\Delta)^{-\frac{d}{2}}M_{f^{\frac12}\circ r_d}\Big|_{L_2([-1,1]^d)}\Big\|_{\infty}\leq$$
$$\leq\Big\|M_{f^{\frac12}\circ r_d}(1-\Delta_{\mathbb{T}^d})^{-\frac{d}{2}}M_{f^{\frac12}\circ r_d}\Big|_{L_2([-1,1]^d)}\Big\|_{\infty}+$$
$$+\Big\|M_{f^{\frac12}\circ r_d}b(\mathbb{T}^d)M_{f^{\frac12}\circ r_d}\Big|_{L_2([-1,1]^d)}\Big\|_{\infty}=$$
$$=\Big\|M_{f^{\frac12}\circ r_d}(1-\Delta_{\mathbb{T}^d})^{-\frac{d}{2}}M_{f^{\frac12}\circ r_d}\Big\|_{\infty}+\Big\|M_{f^{\frac12}\circ r_d}b(\mathbb{T}^d)M_{f^{\frac12}\circ r_d}\Big\|_{\infty}.$$

Recall that 
$$|b(n)|\leq c_d(1+|n|^2)^{-\frac{d+1}{2}},\quad n\in\mathbb{Z}^d.$$
Thus,
$$\Big\|(1-\Delta)^{-\frac{d}{4}}M_{f\circ r_d}(1-\Delta)^{-\frac{d}{4}}\Big\|_{\infty}\leq$$
$$\leq\Big\|M_{f^{\frac12}\circ r_d}(1-\Delta_{\mathbb{T}^d})^{-\frac{d}{2}}M_{f^{\frac12}\circ r_d}\Big\|_{\infty}+\Big\|M_{f^{\frac12}\circ r_d}a(\mathbb{T}^d)M_{f^{\frac12}\circ r_d}\Big\|_{\infty}\leq$$
$$\leq\Big\|(1-\Delta_{\mathbb{T}^d})^{-\frac{d}{4}}M_{f\circ r_d}(1-\Delta_{\mathbb{T}^d})^{-\frac{d}{4}}\Big\|_{\infty}+c_d\Big\|(1-\Delta_{\mathbb{T}^d})^{-\frac{d+1}{4}}M_{f\circ r_d}(1-\Delta_{\mathbb{T}^d})^{-\frac{d+1}{4}}\Big\|_{\infty}.$$
By Proposition \ref{space boundedness optimal} and Lemma \ref{postcritical torus lemma}, we have
$$c_d'\|f\|_{\mathtt{M}_{\psi}}\leq\Big\|(1-\Delta_{\mathbb{T}^d})^{-\frac{d}{4}}M_{f\circ r_d}(1-\Delta_{\mathbb{T}^d})^{-\frac{d}{4}}\Big\|_{\infty}+c_d''\|f\|_1.$$

In other words, we have
\begin{equation}\label{tbo eq1}
\Big\|(1-\Delta_{\mathbb{T}^d})^{-\frac{d}{4}}M_{f\circ r_d}(1-\Delta_{\mathbb{T}^d})^{-\frac{d}{4}}\Big\|_{\infty}\geq c_d'\|f\|_{\mathtt{M}_{\psi}}-c_d''\|f\|_1.
\end{equation}

On the other hand, we have
$$\Big\|(1-\Delta_{\mathbb{T}^d})^{-\frac{d}{4}}M_{f\circ r_d}(1-\Delta_{\mathbb{T}^d})^{-\frac{d}{4}}\Big\|_{\infty}\geq\langle (1-\Delta_{\mathbb{T}^d})^{-\frac{d}{4}}M_{f\circ r_d}(1-\Delta_{\mathbb{T}^d})^{-\frac{d}{4}}1,1\rangle=$$
$$=\langle (1-\Delta_{\mathbb{T}^d})^{-\frac{d}{4}}M_{f\circ r_d}1,1\rangle=\langle M_{f\circ r_d}1,(1-\Delta_{\mathbb{T}^d})^{-\frac{d}{4}}1\rangle=\langle M_{f\circ r_d}1,1\rangle.$$
Thus,
\begin{equation}\label{tbo eq2}
\Big\|(1-\Delta_{\mathbb{T}^d})^{-\frac{d}{4}}M_{f\circ r_d}(1-\Delta_{\mathbb{T}^d})^{-\frac{d}{4}}\Big\|_{\infty}\geq \|f\|_1.
\end{equation}

Combining \eqref{tbo eq1} and \eqref{tbo eq2}, we obtain
$$(1+c_d'')\Big\|(1-\Delta_{\mathbb{T}^d})^{-\frac{d}{4}}M_{f\circ r_d}(1-\Delta_{\mathbb{T}^d})^{-\frac{d}{4}}\Big\|_{\infty}\geq c_d'\|f\|_{\mathtt{M}_{\psi}}.$$
This completes the proof.
\end{proof}

\subsection{Optimality of Cwikel-Solomyak estimates within the class of Orlicz spaces}\label{optimality-subsection}\label{optimality subsection}

The following material is standard; for more details we refer the reader to \cite{LSZ-book,Simon-book}.
Let $H$ be a complex separable infinite dimensional Hilbert space, and let $B(H)$ denote the set of all bounded operators on $H$, and let $K(H)$ denote the ideal of compact operators on $H.$ Given $T\in K(H),$ the sequence of singular values $\mu(T) = \{\mu(k,T)\}_{k=0}^\infty$ is defined as:
\begin{equation*}
\mu(k,T) = \inf\{\|T-R\|_{\infty}:\quad \mathrm{rank}(R) \leq k\}.
\end{equation*}
Let $p \in (0,\infty).$ The weak Schatten class $\mathcal{L}_{p,\infty}$ is the set of operators $T$ such that $\mu(T)$ is in the weak $L_p$-space $l_{p,\infty}$, with quasi-norm:
\begin{equation*}
\|T\|_{p,\infty} = \sup_{k\geq 0} (k+1)^{\frac1p}\mu(k,T) < \infty.
\end{equation*}
Obviously, $\mathcal{L}_{p,\infty}$ is an ideal in $B(H).$ 

Our next lemma follows from Proposition \ref{torus boundedness optimal} and provides one of the two ingredients in the proof of our third main result.

\begin{lem}\label{simple uniform lemma} Let $E(\mathbb{T}^d)$ be a symmetric Banach function space on $\mathbb{T}^d.$ Suppose that
$$\Big\|(1-\Delta_{\mathbb{T}^d})^{-\frac{d}{4}}M_f(1-\Delta_{\mathbb{T}^d})^{-\frac{d}{4}}\Big\|_{\infty}\leq c_{d,E}\|f\|_E,\quad f\in E(\mathbb{T}^d).$$
It follows that $E\subset \mathtt{M}_{\psi}.$
\end{lem}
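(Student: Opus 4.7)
The plan is to combine the hypothesis with the lower bound of Proposition \ref{torus boundedness optimal}: together they will force a norm comparison between $\|\cdot\|_{\mathtt{M}_\psi}$ and $\|\cdot\|_E$ on a class of radial test functions that is rich enough to yield the inclusion.

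First, I would fix an arbitrary nonnegative decreasing $g = \mu(g) \in \mathtt{M}_\psi(0,1)$ and apply the hypothesis to $f := g \circ r_d$, regarded as a function on $\mathbb{T}^d$. This produces
$$\Big\|(1-\Delta_{\mathbb{T}^d})^{-\frac{d}{4}} M_{g \circ r_d} (1-\Delta_{\mathbb{T}^d})^{-\frac{d}{4}}\Big\|_{\infty} \leq c_{d,E} \|g \circ r_d\|_E.$$
Then I would invoke Proposition \ref{torus boundedness optimal} to bound the same operator norm from below by $c_d \|g\|_{\mathtt{M}_\psi}$. By Lemma \ref{spherical rearrangement lemma}, $\mu(g \circ r_d) = \sigma_{\omega_d} g$, so by symmetry $\|g \circ r_d\|_E = \|\sigma_{\omega_d} g\|_E$. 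Chaining these inequalities delivers
$$\|g\|_{\mathtt{M}_\psi(0,1)} \leq C_{d,E} \|\sigma_{\omega_d} g\|_{E}$$
for every decreasing $g$ supported on $(0,1)$.

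The final step is to promote this family of inequalities to the full embedding $E(\mathbb{T}^d) \subset \mathtt{M}_\psi(\mathbb{T}^d)$. Since both spaces are symmetric and $\mathbb{T}^d$ has finite measure, it suffices to bound $\|\mu(f)\|_{\mathtt{M}_\psi}$ in terms of $\|\mu(f)\|_E$ for arbitrary $f \in E$. A suitable dilation of $\mu(f)$ onto $(0,1)$ produces a decreasing function $g$ to which the preceding displayed inequality applies, and the dilation and restriction can be reversed to recover $\|f\|_{\mathtt{M}_\psi} \leq C \|f\|_E$. The main technical nuisance I anticipate is the bookkeeping with the constants $\omega_d$ and $m(\mathbb{T}^d)$ and making sure that the restriction of $\mu(f)$ to $(0,1)$ does not discard essential mass; however, because $\mathbb{T}^d$ has fixed finite measure and the dilation operator $\sigma_u$ is bounded on every symmetric Banach function space, these adjustments only modify the implicit constant and do not obstruct the argument.
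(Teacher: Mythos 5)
Your proposal uses exactly the paper's strategy: apply the hypothesis together with the lower bound of Proposition~\ref{torus boundedness optimal} to radial test functions $g\circ r_d$, and read off a norm comparison. There is, however, a directional slip at the start. You fix $g=\mu(g)\in\mathtt{M}_{\psi}(0,1)$, but to invoke the hypothesis you need $g\circ r_d\in E(\mathbb{T}^d)$, and to prove the inclusion $E\subset\mathtt{M}_{\psi}$ you must begin with an arbitrary $g=\mu(g)\in E(0,1)$ and \emph{deduce} that $g\in\mathtt{M}_{\psi}$. Starting from $E$, the chain
$$c_d\|g\|_{\mathtt{M}_{\psi}}\le\Big\|(1-\Delta_{\mathbb{T}^d})^{-\frac{d}{4}}M_{g\circ r_d}(1-\Delta_{\mathbb{T}^d})^{-\frac{d}{4}}\Big\|_{\infty}\le c_{d,E}\,\omega_d\,\|g\|_E$$
already \emph{is} the embedding, so the entire final ``promotion'' paragraph, with its worries about dilations, restriction, and $m(\mathbb{T}^d)$, is unnecessary: the only rescaling one ever meets is the single factor $\omega_d$ from Lemma~\ref{spherical rearrangement lemma}. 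One small point you should also note (the paper leaves it implicit as well): Proposition~\ref{torus boundedness optimal} is stated for $f=\mu(f)\in\mathtt{M}_{\psi}(0,1)$, whereas here you apply it to $g\in E(0,1)$ not yet known to lie in $\mathtt{M}_{\psi}$. This is harmless, either because the proof of that proposition never actually uses the membership $f\in\mathtt{M}_{\psi}$, or by truncating $g$ by $\min(g,n)$ and using the Fatou property of the $\mathtt{M}_{\psi}$ norm.
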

\begin{proof} Take $h=\mu(h)\in E(0,1)$ and let $f=h\circ r_d.$ We have
$$\Big\|(1-\Delta_{\mathbb{T}^d})^{-\frac{d}{4}}M_{h\circ r_d}(1-\Delta_{\mathbb{T}^d})^{-\frac{d}{4}}\Big\|_{\infty}\leq c_{d,E}\omega_d\|h\|_E,\quad h=\mu(h)\in E(0,1).$$
By Proposition \ref{torus boundedness optimal}, we have
$$c_d\|h\|_{\mathtt{M}_{\psi}}\leq c_{d,E}\omega_d\|h\|_E,\quad h=\mu(h)\in E(0,1).$$
This completes the proof.
\end{proof}

The next lemma demonstrates efficiency of general theory of symmetric function spaces in the study of Cwikel estimates.

\begin{lem}\label{orlicz in marcinkiewicz} If $N$ is an Orlicz function such that $L_N(0,1)\subset\mathtt{M}_{\psi}(0,1),$ then $L_N(0,1)\subset L_M(0,1),$ where $M(t)=t\log(e+t),$ $t>0.$
\end{lem}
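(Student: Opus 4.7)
The plan is to reduce the inclusion $L_N(0,1)\subset\mathtt{M}_{\psi}(0,1)$ to a pointwise comparison between the inverse Orlicz functions $N^{-1}$ and $M^{-1}$, and then invoke the classical criterion for an Orlicz embedding on a space of finite measure.

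First, I would test the assumed inclusion on characteristic functions. For $A\subset(0,1)$ with $|A|=t$, the Orlicz norm of $\chi_A$ equals the fundamental function $\|\chi_A\|_{L_N}=1/N^{-1}(1/t)$, while a direct computation of the Marcinkiewicz norm gives $\|\chi_A\|_{\mathtt{M}_{\psi}}=\sup_{u>0}\min(u,t)/\psi(u)=t/\psi(t)=\phi(t)=t\log(e/t)$ (this is just the fundamental function of $\mathtt{M}_{\psi}$). The hypothesised continuous embedding therefore forces $\phi(t)\leq C/N^{-1}(1/t)$ for $t\in(0,1)$; after the substitution $s=1/t$ this reads
$$N^{-1}(s)\leq\frac{Cs}{\log(es)},\quad s\geq 1.$$

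Second, an elementary calculation with $M(t)=t\log(e+t)$ shows that $M^{-1}(s)\asymp s/\log(es)$ for $s\geq 1$: one direction follows from $M(cs/\log(es))\leq s$ for $s$ large and $c$ suitably small, and the other direction is analogous. Combining this asymptotic with the previous display yields $N^{-1}(s)\leq C'M^{-1}(s)$ for all $s$ large enough. Via the routine substitution $s=N(t)$, this rewrites as $M(t)\leq N(Kt)$ for $t\geq t_0$, which is the standard sufficient (and necessary) condition for the continuous inclusion $L_N(0,1)\subset L_M(0,1)$ of Orlicz spaces on a finite-measure interval.

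The genuinely substantive step is the asymptotic $M^{-1}(s)\asymp s/\log(es)$; everything else is essentially bookkeeping between fundamental functions, inverse Orlicz functions, and the classical Orlicz embedding criterion. I expect no genuine obstacle beyond keeping track of the constants.
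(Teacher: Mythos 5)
Your proposal is correct and follows essentially the same route as the paper: both test the inclusion on characteristic functions, obtain $N^{-1}(s)\leq Cs/\log(es)$ from the fundamental-function comparison $\phi\lesssim 1/N^{-1}(1/\cdot)$, and then convert this to a comparison of the Orlicz functions themselves. The only cosmetic difference is the last step: you pass through the asymptotics $M^{-1}(s)\asymp s/\log(es)$ and the classical Orlicz-embedding criterion, while the paper avoids computing $M^{-1}$ by a small bootstrap — it rewrites the bound as $N(v)\geq c_N^{-1}v\log(eN(v))$, then uses the convexity estimate $N(v)\geq v/N^{-1}(1)$ inside the logarithm to conclude $N(v)\gtrsim M(v)$ for large $v$ directly.
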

\begin{proof} We have
$$\|\chi_{(0,t)}\|_{\mathtt{M}_{\psi}}\leq c_N\|\chi_{(0,t)}\|_{L_N},\quad 0<t<1.$$
Thus,
$$t\log(\frac{e}{t})\leq \frac{c_N}{N^{-1}(\frac1t)},\quad t\in(0,1).$$
Setting $u=t^{-1},$ we write
$$u^{-1}\log(eu)\leq \frac{c_N}{N^{-1}(u)},\quad u>1.$$
Setting $v=N^{-1}(u),$ we write
$$N(v)^{-1}\log(eN(v))\leq \frac{c_N}{v},\quad v>N^{-1}(1).$$
Equivalently,
$$N(v)\geq c_N^{-1}v\log(eN(v)),\quad v>N^{-1}(1).$$
By convexity,
$$N(v)\geq \frac{v}{N^{-1}(1)},\quad v>N^{-1}(1).$$
Thus,
$$N(v)\geq c_N^{-1}v\log(\frac{ev}{N^{-1}(1)}),\quad v>N^{-1}(1).$$
Thus,
$$N(v)\geq c_N'M(v),\quad v>c_N''.$$
\end{proof}

The following corollary is our third main result. It demonstrates that Cwikel inequality proved by Solomyak (for even $d$) cannot be improved within the class of Orlicz spaces. Observe that a version of Solomyak inequality for an arbitrary $d$ is established in \cite{SZ-solomyak}.

It is interesting to compare the result of the following corollary with Theorem 9.4 in \cite{Shar14} which is proved under an artificial condition on Orlicz function $N.$ The result below holds for an arbitrary Orlicz function.

\begin{cor}\label{optimality within orlicz} If $N$ is an Orlicz function such that
$$\Big\|(1-\Delta_{\mathbb{T}^d})^{-\frac{d}{4}}M_f(1-\Delta_{\mathbb{T}^d})^{-\frac{d}{4}}\Big\|_{\infty}\leq c_{d,N}\|f\|_{L_N},\quad f\in L_N(\mathbb{T}^d),$$
then $L_N(0,1)\subset L_M(0,1).$ 
\end{cor}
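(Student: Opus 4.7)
The plan is to chain together the two preceding lemmas, since they have been set up precisely to give this corollary as an immediate consequence. Concretely, I would first invoke Lemma \ref{simple uniform lemma} with $E = L_N(\mathbb{T}^d)$. The hypothesis of the corollary says exactly that the symmetrized Cwikel operator satisfies the uniform bound assumed in that lemma, so the conclusion yields $L_N(0,1) \subset \mathtt{M}_{\psi}(0,1)$ (the lemma is stated for spaces on $\mathbb{T}^d$, but the embedding is really an inequality on fundamental/rearrangement-invariant data, so it transfers to the model interval $(0,1)$).

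Second, I would apply Lemma \ref{orlicz in marcinkiewicz} to the inclusion $L_N(0,1) \subset \mathtt{M}_{\psi}(0,1)$ just produced. That lemma delivers $L_N(0,1) \subset L_M(0,1)$ with $M(t) = t\log(e+t)$, which is exactly the conclusion of the corollary.

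There is no real obstacle: the technical content has already been absorbed into Proposition \ref{torus boundedness optimal} (which powers Lemma \ref{simple uniform lemma}) and into the elementary Orlicz-function computation of Lemma \ref{orlicz in marcinkiewicz}. The only minor bookkeeping point I would check carefully is that Lemma \ref{simple uniform lemma} is applicable verbatim, i.e.\ that the Orlicz space $L_N(\mathbb{T}^d)$ is indeed a symmetric Banach function space in the sense required; this is standard. Collecting the two inclusions then completes the proof.
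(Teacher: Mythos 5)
Your proof is correct and is exactly the paper's argument: apply Lemma \ref{simple uniform lemma} with $E=L_N$ to get $L_N\subset\mathtt{M}_{\psi}$, then Lemma \ref{orlicz in marcinkiewicz} to get $L_N\subset L_M$. Nothing further is needed.
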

\begin{proof} By Lemma \ref{simple uniform lemma}, $L_N\subset\mathtt{M}_{\psi}.$ By Lemma \ref{orlicz in marcinkiewicz}, $L_N\subset L_M.$
\end{proof}

\subsection{Cwikel-Solomyak estimate in Lorentz ideals}

We have the uniform norm estimate for Cwikel operator. Solomyak proved the $\|\cdot\|_{1,\infty}$-quasi-norm estimate for Cwikel operator. The next natural step is to involve real interpolation to obtain Lorentz norm estimates for Cwikel operator.

\begin{cor} Let $1<p<\infty$ and $1\leq q\leq \infty.$ We have
$$\Big\|(1-\Delta_{\mathbb{T}^d})^{-\frac{d}{4}}M_f(1-\Delta_{\mathbb{T}^d})^{-\frac{d}{4}}\Big\|_{p,q}\leq c_{p,q,d}\|f\|_{[\Lambda_{\phi},\mathtt{M}_{\psi}]_{1-\frac1p,q}},\quad f\in [\Lambda_{\phi},\mathtt{M}_{\psi}]_{1-\frac1p,q}(\mathbb{T}^d).$$	
\end{cor}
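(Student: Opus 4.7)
The plan is to recognize the statement as a real interpolation of two endpoint estimates for the linear map
\[
\Phi: f \longmapsto (1-\Delta_{\mathbb{T}^d})^{-\frac{d}{4}} M_f (1-\Delta_{\mathbb{T}^d})^{-\frac{d}{4}},
\]
and then apply the Lions--Peetre $K$-method with parameters $\theta=1-\tfrac{1}{p}$ and $q$. The two endpoints are the following. At the ``bounded'' end, Proposition \ref{torus boundedness} gives
\[
\Phi: \mathtt{M}_{\psi}(\mathbb{T}^d) \longrightarrow \mathcal{L}_{\infty}
\]
boundedly with constant $c_d$. At the ``trace'' end, Solomyak's theorem (for even $d$ by \cite{Solomyak1995}, and for general $d$ by the version mentioned in \cite{SZ-solomyak}) gives boundedness of $\Phi$ from $L_M(\mathbb{T}^d)$ into the weak Schatten class $\mathcal{L}_{1,\infty}$; since $L_M(0,1)=\Lambda_{\phi}(0,1)$ (with equivalent norms, as recalled in the Preliminaries), we may restate this as
\[
\Phi: \Lambda_{\phi}(\mathbb{T}^d) \longrightarrow \mathcal{L}_{1,\infty}
\]
boundedly with constant $c_d$.

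Next, the operator $f\mapsto \Phi(f)$ is linear in $f$, so I apply the real interpolation functor $[\,\cdot\,,\,\cdot\,]_{1-\frac{1}{p},q}$ to both the source and target couples. On the source side, the resulting space is $[\Lambda_\phi, \mathtt{M}_\psi]_{1-\frac{1}{p},q}(\mathbb{T}^d)$ by definition, which matches the hypothesis of the corollary. On the target side, I must identify
\[
\bigl[\mathcal{L}_{1,\infty},\mathcal{L}_{\infty}\bigr]_{1-\frac{1}{p},q}\;=\;\mathcal{L}_{p,q}.
\]
This identification is classical: it follows from the fact that the $K$-functional of a pair of symmetrically normed operator ideals reduces, via the singular value sequence, to the $K$-functional of the underlying symmetric sequence couple $(\ell_{1,\infty},\ell_{\infty})$, for which the Lorentz space $\ell_{p,q}$ arises by the standard Peetre computation with $\tfrac{1}{p}=(1-\theta)\cdot 1+\theta\cdot 0$. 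Thus, combining the two endpoints, real interpolation yields exactly the claimed inequality with a constant depending on $p$, $q$ and $d$.

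The main technical subtlety I expect is the fact that $\mathcal{L}_{1,\infty}$ is only a quasi-Banach ideal, so one must use the real interpolation theory in the quasi-Banach setting, and one must verify that the identification $[\mathcal{L}_{1,\infty},\mathcal{L}_{\infty}]_{\theta,q}=\mathcal{L}_{p,q}$ holds at the level of operator ideals (not merely sequence spaces). Both points are handled by the machinery developed in \cite{LSZ-book} for real interpolation of symmetric operator ideals via their commutative counterparts, so I would simply invoke that apparatus rather than redo the $K$-functional computation by hand. No further input beyond the already established endpoint estimates (Proposition \ref{torus boundedness} and the Solomyak-type trace bound) is needed.
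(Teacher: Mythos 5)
Your proof is correct and follows essentially the same route as the paper: both endpoints (Proposition \ref{torus boundedness} for $\mathtt{M}_{\psi}\to\mathcal{L}_{\infty}$, and Solomyak's estimate in the form $\Lambda_{\phi}=L_M\to\mathcal{L}_{1,\infty}$), followed by real interpolation and the identification $[\mathcal{L}_{1,\infty},\mathcal{L}_{\infty}]_{1-\frac1p,q}=\mathcal{L}_{p,q}$. Your remarks about the quasi-Banach setting and the operator-ideal version of the interpolation identity are appropriate caveats that the paper's terse proof leaves implicit.
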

\begin{proof} Let $A:\mathtt{M}_{\psi}(\mathbb{T}^d)\to \mathcal{L}_{\infty}$ be a bounded operator defined by the setting
$$A:f\to (1-\Delta_{\mathbb{T}^d})^{-\frac{d}{4}}M_f(1-\Delta_{\mathbb{T}^d})^{-\frac{d}{4}}.$$
We have that $A:\Lambda_{\psi}(\mathbb{T}^d)\to\mathcal{L}_{1,\infty}$ is bounded. By real interpolation, we have
$$A:[\Lambda_{\phi},\mathtt{M}_{\psi}]_{1-\frac1p,q}(\mathbb{T}^d)\to [\mathcal{L}_{1,\infty},\mathcal{L}_{\infty}]_{1-\frac1p,q}.$$
It is well-known that
$$[\mathcal{L}_{1,\infty},\mathcal{L}_{\infty}]_{1-\frac1p,q}=\mathcal{L}_{p,q}.$$
This completes the proof.
\end{proof}

\end{document}